\newcommand{\vs}[1][1]{\vspace{#1\baselineskip}}
 \DeclareMathOperator{\im}{im}
\DeclareMathOperator{\coker}{coker}
 \DeclareMathOperator{\HH}{H}
\DeclareMathOperator{\Hom}{Hom}
\DeclareMathOperator{\Pic}{Pic}
\newtheorem{theorem}{Theorem}[section]
\newtheorem{lemma}[theorem]{Lemma}
\newtheorem{corollary}[theorem]{Corollary}
\newtheorem{proposition}[theorem]{Proposition}
\newtheorem{remark}[theorem]{Remark}
\newtheorem{conjecture}[theorem]{Conjecture}
\newcommand{\pp}{{\mathbb P}}
\newcommand\sF{{\mathcal F}}
\newcommand\sH{{\mathcal H}}
\newcommand\sI{{\mathcal I}}
\newcommand\sL{{\mathcal L}}
\newcommand\sN{{\mathcal N}}
\newcommand\sO{{\mathcal O}}
\def\f{\Gamma}
\def\PP{{\mathbb P}}
\def\ZZ{{\mathbb Z}}
\def\PP{{\mathbb P}}
\newcommand{\proj}[1]
{ \mathchoice
           { {\mathbb P}^{#1} }
           { {\mathbb P}^{#1} }
           { {\mathbb P}^{#1} }
           { {\mathbb P}^{#1} }
         }
\begin{document}
\title{Components of the Hilbert scheme of space curves \\ on
  low-degree smooth surfaces}

\author{Jan O. Kleppe and John C. Ottem}

\date{}
\maketitle
\vspace*{-0.4in}
\begin{abstract}
  \noindent We study maximal families $W$ of the Hilbert scheme, $
  \HH(d,g)_{sc}$, of smooth connected space curves whose general curve $C$
  lies on a smooth surface $S$ of degree $s$. 
  We give conditions on $C$ under which $W$ is a generically smooth component
  of $ \HH(d,g)_{sc}$ and we determine $\dim W$. If $s=4$ and $W$ is an
  irreducible component of $ \HH(d,g)_{sc}$, then the Picard number of $S$ is
  at most $2$ and we explicitly describe, also for $s \ge 5$, non-reduced and
  generically smooth components in the case $\Pic(S)$ is generated by the
  classes of a line and a smooth plane curve of degree $s-1$. For curves on
  smooth cubic surfaces the first author finds new classes of non-reduced
  components of $ \HH(d,g)_{sc}$, thus making progress in proving a conjecture
  for such families. 
  \\[-2.5mm]

  \noindent {\bf AMS Subject Classification.} 14C05 (Primary), 14C20, 14K30,
  14J28, 14H50 (Secondary).


  \noindent {\bf Keywords}. Space curves, Quartic surfaces, Cubic surfaces,
  Hilbert scheme, Hilbert-flag scheme.
 \end{abstract}
 \vspace*{-0.25in}
\thispagestyle{empty}

\section{Introduction and Main Results} 

In this paper we study the Hilbert scheme of smooth connected space curves, $
\HH(d,g)_{sc}$, with regard to dimension and
smoothness, 
with a special emphasis on existence of non-reduced components. The first example of a non-reduced component was found by Mumford \cite{Mu}. There are
several papers that consider such problems, see e.g. \cite{DP}, \cite{E, El,
  F}, \cite{GP2,H1}, \cite{K1, K2,K3,K96, K4, Krao}, \cite{MDP1,MDP2,MDP3,
  N} and the book \cite{H2}.
Here we generalize the approach that was used in \cite{K1} 
for curves on cubic surfaces to study families of curves on smooth surfaces of
degree $s \ge 4$. In particular we investigate when maximal irreducible closed
subsets $W$ of the Hilbert scheme $ \HH(d,g)_{sc}$ whose general curve $C$
lies on a {\it quartic} surface, form non-reduced, or generically smooth,
irreducible components of $\HH(d,g)_{sc}$. We find a pattern similar to what
is known for maximal irreducible families of curves on smooth cubic surfaces;
if $H^1(\sI_C(s))= 0$, $\sI_C$ the sheaf ideal of $C$, 
then $W$
turns out to be a generically smooth component of $
\HH(d,g)_{sc}$. 
If, however, $H^1(\sI_C(s)) \neq 0$ {\it and} the genus is sufficiently large,
then $W$ is still an irreducible component, but it is now non-reduced. For  
$s=4$ it suffices to take ``$g$ large'' as $g > G(d,5)$, the maximum genus of
curves of degree $d$ not contained in a degree-$4$ surface (see
\eqref{maxgen2}), or as
the better bound \\[-1mm]
\begin{equation} \label{ineq} \ g \ > \ \min \{\, G(d,5)-1,\, \frac{d ^{2}
  }{10} + 21 \} \ \ {\rm and} \ \ \ d \geq 21 \, ,
\end{equation}
see Theorem~\ref{main4} and  Corollary~\ref{corrmain4} of Section 4.

Let $s(C)$ denote the minimal degree of a surface containing a curve $C$. If
$W$ is an irreducible closed subset of $ \HH(d,g)_{sc}$, we define
$s(W):=s(C)$ where $C$ is a general curve of $W$. As in \cite{K1} we say $W$
is {\em $s(W)$-maximal} if it is maximal with respect to $s(W)$, i.e. $s(V) >
s(W)$ for any closed irreducible subset $V$ properly containing $W$. We say
$W$ is an $s(W)$-maximal family or subset of $ \HH(d,g)_{sc}$ in this case. By
Remark~\ref{notirredcomp} below, if a very general curve of a 4-maximal family
$W$ sits on a smooth quartic surface $S$ and $d>16$, then the Picard number of
$S$ is {\em at most} 2.

Note that an $s$-maximal family $W$ needs not be an irreducible component of $
\HH(d,g)_{sc}$, but the converse holds (with $s=s(W)$). For instance if $W$ is
a $2$-maximal family whose general curve $C \subset \proj{1} \times \proj{1}$
has bidegree $(p,q)$ , $p \le q$, and degree $d=p+q \ge 6$, then $W$ is not an
irreducible component precisely when $p \le 2$, e.g. if $p=1$ then
$g=(p-1)(q-1)=0$ and the codimension of $W$ in $ \HH(d,g)_{sc}$ is
$4d-(2d+g+8)= 2d-8$, (cf. \cite{T}). Indeed, by Remark~\ref{irredcomp}, 
$g \ge 2d-8$ is a necessary condition for $W$ to be an irreducible component.
This condition is sufficient by \eqref{new4.5} below.

For a $3$-maximal family $W$, $g \ge g_1:= 3d-18$ is necessary (for $d > 9$)
while \cite[Cor.\,17]{K1} shows that $g > g_2:=\lfloor (d^2-4)/8 \rfloor$ is
sufficient for $W$ to be an irreducible component. Indeed $g > g_2$
implies 
\eqref{new4.5} by \cite[Cor.\,17]{K1}. Since the sufficient condition also
implies that $W$ is generically smooth, $W \subset \HH(d,g)_{sc}$ may be a
non-reduced component only when $g_1 \le g \le g_2$. For $g_1=g_2$ and $d >
10$ we have $g=24, d = 14$ 
and the existence of a non-reduced component $W $ with $s(W)=3$ as shown by
Mumford in \cite{Mu}. The first author generalized this result in \cite{K2}
and showed the existence of $3$-maximal families of non-reduced components of
$\HH(d,\lfloor (d^2-4)/8 \rfloor)_{sc}$ for every $d \ge 14$, where $d=14$
corresponds to Mumford's example, see \cite{E}, \cite{K1} and the appendix for
further generalizations and a 
conjecture.

In this paper we consider closely $4$-maximal families $W$ of curves on smooth
quartic surfaces $S \subset \proj{3}$. To get interesting classes, we study
surfaces $S$ where the Picard group $\Pic(S)$ is freely generated over
$\mathbb Z$ by the classes of two smooth connected curves $\Gamma_1$ and
$\Gamma_2$ satisfying $\Gamma_1^2=-2 $, $\Gamma_2^2=0$, $\Gamma_1 \cdot
\Gamma_2=3$, i.e. with
intersection matrix  $ \left(\begin{smallmatrix}-2 & 3 \\
    3 & 0 \end{smallmatrix}\right)$, and such that $H= \Gamma_1+\Gamma_2$ is a
hyperplane section. 
If $C \equiv a\f_1+b\f_2$ are linearly equivalent divisors, we
show that $a,b \ge 0$ if $C$ is a curve (i.e. effective divisor). The
necessary condition $g \ge 4d-33$ of Remark~\ref{irredcomp} for $W$ to be an
irreducible component implies $a > 4$ for $d > 16$ because $g = ad-2a^2+1$. In
Section 5 we prove: 

\begin {theorem} \label{mainQ} Let $S \subset \proj{3}$ be a smooth quartic
  surface with $ \Gamma_1, \Gamma_2$ and $H$ as above, let $C \equiv
  a\f_1+b\f_2$ 
  be a smooth connected curve of degree $d > 16$ 
  and suppose $a \ne b$. Then $C$ belongs to a unique $4$-maximal family $W
  \subseteq \HH(d,g)_{sc}$. Moreover if $\tilde S$ is a quartic surface
  containing a very general member of $W$, then $\Pic(\tilde S)$ is freely
  generated by the classes of a line and a smooth plane cubic curve, and every
  $C \equiv a\f_1+b\f_2$ contained in some surface $S$ as above belongs to
  $W$. Furthermore $\dim W = g+33$, \\[-2mm] $$d=a+3b \ , \ \ \ g=3ab-a^2+1 \
  \ \ and $$
  
  {\rm I)} $W$ is a generically smooth, irreducible component of \
  $\HH(d,g)_{sc}$ provided \\[-1mm]

  \hspace{4.4cm}
    $4  < a  < $  {\Large $ \frac{3b} {2} $} $- 1 $  \hspace{0.3cm} {\rm or}  \hspace{0.4cm} $(a,b)=(5,4).$\\[-2mm]

  {\rm II)} $W$ is a non-reduced irreducible component of \ $\HH(d,g)_{sc}$
  provided
\begin{equation}\label{pic2}
  \frac {3b} {2} - 1 \le a \le \frac {3b}2  \hspace{0.5cm} , \hspace{1.0cm} (a,b) \ne (5,4)
\end{equation} and \eqref{ineq} holds. 
Explicitly, this region is given by the three families
\\[-3mm]
 
\hspace{0.1cm} $a)\,\, (8+3k,6+2k)\, \qquad b)\,\, (10+3k,7+2k) \qquad c)\,\,
(15+3k,10+2k) \text{ for } k \geq 0,$ 
\\[2mm]
and the dimension of their tangent spaces of \  $\HH(d,g)_{sc}$ at $(C)$ is \ 
$$\dim W+ h^1(\sI_C(4))$$ 
where $h^1(\sI_C(4))=1$ {\rm (}resp. $h^1(\sI_C(4))=2$,
$h^1(\sI_C(4))=4$ {\rm )} for the family $a)$  {\rm (}resp. $b),c)$ {\rm )}.
\\[-3mm]
\end{theorem}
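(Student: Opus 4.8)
The strategy is to set up a surjection argument relating the Hilbert scheme $\HH(d,g)_{sc}$ near $[C]$ to the Hilbert-flag scheme of pairs $(C,S)$, exactly along the lines of \cite{K1} but now tracking the contribution of $H^1(\sI_C(4))$. First I would fix $C\equiv(a,b)$ on $S$, verify the numerical identities $d=a+3b$ and $g=3ab-a^2+1$ by the standard adjunction/Riemann--Roch computation on the quartic $S$ (using $H=\Gamma_1+\Gamma_2$, $K_S=0$, so $C\cdot H = (a,b)\cdot(1,1) = a+3b$ and $2g-2 = C^2 = 2ab - 2a^2$), and record the cohomology of $\sO_S(C)$ and of $\sI_{C/S}(4)=\sO_S(H_4-C)$ where $H_4\equiv(4,4)$, i.e. of the class $(4-a,4-b)$; these vanishings/nonvanishings, obtained from the intersection matrix and the Kawamata--Viehweg / Saint-Donat type results on K3 surfaces, are what the appendix supplies existence for. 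From the exact sequences $0\to\sI_S(4)\to\sI_C(4)\to\sI_{C/S}(4)\to 0$ and $0\to\sI_C(n)\to\sI_S(n)\oplus\sI_{C/S}(n)\to\ldots$ one reads off when $H^1(\sI_C(4))=0$ (case I) versus $H^1(\sI_C(4))\neq 0$ (case II), the dividing line being precisely $a$ versus $\tfrac{3b}{2}-1$.

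Next I would compute $\dim W$. The family of smooth quartics is $34$-dimensional, and on a fixed $S$ the curves $\equiv(a,b)$ move in the linear system $|C|$ of dimension $h^0(\sO_S(C))-1 = g$ (using $h^1(\sO_S(C))=0$, which holds here, and $\chi(\sO_S(C)) = 2 + C^2/2 = g+1$). Since a general such $C$ lies on a unique quartic when $d>16$ (this is where the Picard-number-$\le 2$ statement from Remark~\ref{notirredcomp} and the hypothesis $a\ne b$ enter, to rule out $C$ lying on a pencil of quartics or on a quartic with larger Picard group whose very general member still contains a curve of class $(a,b)$), the incidence correspondence of pairs $(C,S)$ dominates $W$ generically finitely, giving $\dim W = 34 + g - 1 = g+33$. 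The identification of $\Pic(\tilde S)$ for a very general quartic $\tilde S\supset C$ as generated by a line and a plane cubic is the geometric translation: $\Gamma_1^2=-2$ with $\Gamma_1\cdot H=1$ forces $\Gamma_1$ to be a line, and $\Gamma_2 = H-\Gamma_1$ is then a plane cubic with $\Gamma_2^2=0$; minimality of the Picard number (again via Remark~\ref{notirredcomp}) shows no smaller-degree surface intervenes, so $s(C)=4$ and $W$ is genuinely $4$-maximal, and conversely every $(a,b)$-curve on such a surface lands in the same $W$ by the irreducibility of the incidence family.

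For part I, I would show $W$ is a generically smooth component by proving $H^1(\sN_C)=0$ (equivalently $\HH(d,g)_{sc}$ is smooth of dimension $h^0(\sN_C) = \dim W = g+33$ at $[C]$) and that no deformation of $C$ leaves the locus of curves on a quartic; the normal bundle is controlled by the exact sequence $0\to\sN_{C/S}\to\sN_C\to\sN_{S/\proj 3}\otimes\sO_C\to 0$ together with the vanishing $H^1(\sI_C(4))=0$, which under the range $4<a<\tfrac{3b}{2}-1$ forces the required $H^1$-vanishings (this is the analogue of the cubic-surface computation in \cite{K1}, Thm.\ 2.x). For part II, in the range $\tfrac{3b}{2}-1\le a\le\tfrac{3b}{2}$ we have $H^1(\sI_C(4))\neq 0$, so the tangent space to $\HH(d,g)_{sc}$ at $[C]$ strictly exceeds $\dim W$; the point is to show $W$ is \emph{still} a component — no larger irreducible family through $C$ exists — which is where the genus bound \eqref{ineq} is used, via an obstruction/semicontinuity argument (bounding $\dim\GradAlg$ or using the fact that curves with $g>G(d,5)$ cannot sit on a quartic-or-lower that deforms, cf.\ \eqref{maxgen2}) to rule out curves of the same $(d,g)$ on quintics specializing to $W$. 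The explicit three sub-families $a)$, $b)$, $c)$ are just the lattice points $(a,b)=(8+3k,6+2k),(10+3k,7+2k),(15+3k,10+2k)$ satisfying simultaneously \eqref{pic2} and \eqref{ineq}, checked by substituting $d=a+3b$, $g=3ab-a^2+1$ into the inequalities. I expect the main obstacle to be part II: not the cohomology computations, but verifying that the non-reduced $W$ does not lie in the closure of a bigger family — i.e.\ making the obstruction calculation tight enough that the crude bound $G(d,5)$ can be improved to the quadratic bound $\tfrac{d^2}{10}+21$ of \eqref{ineq}, which requires a careful estimate of $h^0(\sI_C(5))$ and the dimension of the flag scheme of pairs on quintics.
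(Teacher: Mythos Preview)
Your overall architecture matches the paper's: compute $d,g$ via intersection theory, use the flag scheme $D(d,g;4)$ and its projections to $\HH(d,g)_{sc}$ and to the relative Picard scheme, then split into cases according to $H^1(\sI_C(4))$. Two points deserve correction, one on each side.

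\medskip

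\textbf{Part I.} Your plan to ``prove $H^1(\sN_C)=0$'' will fail: in the range $4<a<\tfrac{3b}{2}-1$ one has $h^1(\sN_C)=h^1(\sO_C(4))=h^0(\sO_S(C-4H))$, and since $C-4H\equiv(a-4,b-4)$ is effective with both entries positive this is typically nonzero (indeed $h^1(\sN_C)=\dim W-4d=g+33-4d$). Vanishing of $H^1(\sN_C)$ is sufficient for smoothness but not necessary, and here it does not hold. What the paper actually uses (Proposition~\ref{propcomp}, resting on \cite[Thm.~10]{K1}) is that $H^1(\sI_C(4))=0$ makes the first projection $pr_1\colon D(d,g;4)_{sc}\to\HH(d,g)_{sc}$ \emph{smooth} at $(C,S)$; combined with smoothness of $D(d,g;4)$ at $(C,S)$ (which follows from $\coker\alpha_C=0$, i.e.\ the infinitesimal Noether--Lefschetz theorem for $s=4$ and $C$ not a c.i.), this gives that $W$ is a generically smooth component with $\dim W=h^0(\sN_C)$ directly, without ever touching $H^1(\sN_C)$. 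Your phrase ``no deformation of $C$ leaves the locus of curves on a quartic'' is then automatic from the smoothness of $pr_1$, not a separate step.

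\medskip

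\textbf{Picard group of $\tilde S$.} Knowing $\rho(\tilde S)\le 2$ from Remark~\ref{notirredcomp} is not enough to identify the generators as a line and a plane cubic; a priori the rank-$2$ lattice could be something else. The paper's argument is more structured: it uses that $D(1,0;4)_{sc}$ (pairs of a line on a quartic) is irreducible, that the map $\eta\colon(\sO_{S_1}(E_1),S_1)\mapsto(\sO_{S_1}(E_1)^{\otimes(a-b)}(b),S_1)$ is a local isomorphism on $\Pic$ (because $\alpha_{\sL}$ scales linearly in the divisor class), and then invokes Lopez's Lemma~\ref{lope} to conclude that a very general quartic through a line has $\Pic$ generated by that line and $H$. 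Your sketch omits this chain.

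\medskip

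\textbf{Part II.} You anticipate the hard step to be sharpening $G(d,5)$ to the quadratic bound $d^2/10+21$. In fact (see the remark after Corollary~\ref{corrmain4}) the three explicit families all satisfy $g>G(d,5)-1$, so the cruder half of \eqref{ineq} already suffices and Corollary~\ref{corrmain4} applies directly; no estimate of $h^0(\sI_{C'}(5))$ beyond what goes into that corollary is needed for Theorem~\ref{mainQ}. (Also, a small slip: $C^2=-2a^2+6ab$, not $2ab-2a^2$.)
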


One may show that $W$ is non-empty, i.e. that there
exist smooth connected curves $C \equiv a\f_1+b\f_2$ if and only if $0 < a \le
\frac{3b} {2}, \ {\rm or \ } (a,b) \in \{(1,0),(0,1)\}$. 
The case $a=b$ corresponds to $C$ being a
complete intersection of S with some other surface (a c.i. in $S$). \vs[0.3]

We also consider curves sitting on smooth surfaces containing a line and
corresponding $s$-maximal families $W$ for every $s \ge 5$, and we get similar
results as in
I) above while we 
in II) only prove that $W$ is a component (i.e the non-reducedness is open),
cf.\,Theorem~\ref{mainQs}. For $s=5$ we get a little more:

\begin {theorem} \label{mainQ5} Let $S \subset \proj{3}$ be a smooth quintic
  surface containing a line $ \Gamma_1$, let $\Gamma_2 \equiv H - \Gamma_1$,
  $H$ a hyperplane section, be a smooth quartic curve and suppose $\Pic(S)
  \simeq\ZZ \f_1 \oplus \ZZ \f_2$. Let $C \equiv a\f_1+b\f_2$ 
  be a smooth connected curve of degree $d > 25$ 
  with $a \ne b$ and $a,b > 1$. Then $C$ belongs to a unique $5$-maximal
  family $W \subseteq \HH(d,g)_{sc}$. Moreover if $\tilde S$ is a quintic
  surface containing a very general member of $W$, then $\Pic(\tilde S)$ is
  freely generated by the classes of a line and a smooth plane quartic curve,
  and every $C \equiv a\f_1+b\f_2$ contained in some surface $S$ as above
  belongs to $W$. Furthermore $\dim W = -d + g+56$, where \\[-2mm] $$d=a+4b \
  , \ \ \ g=4ab + \frac{1}{2}(a+4b-3a^2)+1 \ \ \ and $$
  
  {\rm I)} $W$ is a generically smooth, irreducible component of \
  $\HH(d,g)_{sc}$ provided 
  {\large  
    $5  < a  <  \frac{4b} {3} - 1 . $} \\[-2mm]

  {\rm II)} $W$ is a non-reduced irreducible component of \ $\HH(d,g)_{sc}$ for
  $(a,b)=(4n,3n)\, , \ n \geq 3$.
\end{theorem}

To get II) we need a natural map $H^0(\sN_C) \to H^1(\sI_C(s))$ to be non-zero,
cf.\,\eqref{alphaN} below. Since this map is surjective for $S$ smooth of
degree $s = 4$, 
$H^1(\sI_C(s)) \ne 0$ suffices to get II) in Theorem~\ref{mainQ}.

Note that we have $0 < a \le \frac{4b} {3}$ for $C$ as above 
and that quintic surfaces as in
Theorem~\ref{mainQ5} exist.

\vspace{2mm}
Another main result (Theorem~\ref{thmcomp}), related to I) above, is obtained
by studying certain maps that involve the relative Picard scheme.
Specializing to $s$-maximal families satisfying $d >s^2$, 
we get:

\begin{theorem} \label{propcomp} Let $s \ge 1$ be an integer and let $W
  \subseteq {\rm H}(d,g)_{sc}$ be an $s$-maximal family such that $d >s^2$.
  Let $C$ be a member of $W$ sitting on a smooth surface $S$ of degree $s$
  satisfying $$H^1(\sI_C(s))= H^1(\sI_C(s-4))=0.$$ Let $E$ be a curve on $S$,
  $H$ a hyperplane section and suppose $C \equiv eE +fH$ for some $e \ne 0,f
  \in \mathbb Z$. Let $t$ be the non-negative integer $ t: = h^1(\sN_{E})-
  h^1(\sO_{E}(s))$ where $\sN_{E}$ is the normal sheaf of $E \subset
  \proj{3}$. If $E$ is either arithmetically Cohen-Macaulay, {\sf or}
  $t=0$ and $H^1(\sI_E(s)) = H^1(\sI_E(s-4))=0$, then $W$ is a generically
  smooth irreducible component of \ ${\rm H}(d,g)_{sc}$ (indeed $C$ and $ eE
  +fH$ are unobstructed), and 
  \begin{equation*} \dim W = (4-s)d+g + {s+3 \choose 3}-2 +
    h^0(\sI_{E}(s-4)) +t \, .
\end{equation*}
\end{theorem}

As a consequence we get a non-trivial formula for $h^1(\sN_C)$. Note that this
theorem shows the unobstructedness of a curve with a multiplicity-$e$
structure on $S$ under some assumptions. The components in
Theorems~\ref{mainQ}, \ref{mainQ5} and \ref{mainQs} correspond to $
h^0(\sI_{E}(s-4))$ large in Theorem~\ref{propcomp},
cf.\,Remark~\ref{remthmcomp}.

Finally the first author consider in the appendix a conjecture about
non-reduced components for maximal families $W \subseteq \HH(d,g)_{sc}$ of
linearly normal curves on a smooth cubic surface $S$ \cite[Conj.\;4]{K1}. In
Theorem~\ref{mainC} he extends the known range where the
conjecture holds. 
We thank O. A. Laudal for interesting discussions on that subject.
We also thank D. Eklund for a discussion of K3 surfaces and R.
Hartshorne for his comments. Also thanks to the careful referee for very 
helpful comments.

\subsection {Notations and terminology} In this paper the ground field $k$ is
{\em algebraically closed of characteristic zero} (and equal to the complex
numbers in the statements where the concept ''very general'' is used).
A surface $S$ in $\proj{3}$ is a hypersurface, defined
by a single equation. A curve $C$ in $\proj{3}$ (resp. in $S$) is a {\it pure
  one-dimensional} subscheme of $\proj{}:=\proj{3}$ (resp. $S$) with ideal
sheaf $\sI_C$ (resp. $\sI_{C/S}$) and normal sheaf $\sN_C =
{\sH}om_{\sO_{\proj{}}}(\sI_C,\sO_C)$ (resp. $\sN_{C/S} =
\Hom_{\sO_{S}}(\sI_{C/S},\sO_C)$). We denote by $d=d(C)$ (resp. $g= g(C)$) the
degree (resp. arithmetic genus) of $C$. If $\sF$ is a coherent
$\sO_{\proj{}}$-Module, we let $H^i(\sF) = H^i(\proj{},\sF)$, $h^i(\sF) = \dim
H^i(\sF)$, $\chi(\sF) = \Sigma (-1)^i h^i(\sF)$ and we often write
$H^i(S,\sO_S(C))$ as $H^i(\sO_S(C))$ for a Cartier divisor $C$ on $S$.
Let 
\[
\ \ \ s(C) = \min \{\, n \, \arrowvert \, h^0(\sI_C(n)) \neq 0 \} \, .
  \]
  We denote by $\HH(d,g)$ (resp. $\HH(d,g)_{sc}$) the Hilbert scheme of (resp.
  smooth connected) space curves of Hilbert polynomial $\chi(\sO_C(t))=dt+1-g$
  \cite{G}. A curve $C$ is called {\it unobstructed} if $\HH(d,g)$ is smooth
  at the corresponding point $(C)$. The curve in a small enough open
  irreducible subset $U$ of $\HH(d,g)$ is called a {\it general} curve of
  $\HH(d,g)$. So any member of $U$ has all the openness properties which we
  want to require. 
A {\it generization} $C' \subset \proj{3}$ of $C \subset \proj{3}$
in 
$\HH(d,g)$ is the general curve of some irreducible subset of $\HH(d,g)$
containing $(C)$. By an irreducible component of $\HH(d,g)$ we always mean a
{\it non-embedded} irreducible component. We denote by ${\rm H}(s)$ the
Hilbert scheme of surfaces of degree $s$ in $\proj{3}$. A member of a closed
irreducible subset $V$ of ${\HH}(s)$ or $\HH(d,g)_{sc}$ is
called {\it very general} in $V$ if it is smooth and sits outside a countable
union of proper closed subset of $V$.

\section{ Background}

In this section we first recall some results from \cite{K1} needed in this
paper. The proofs use the deformation theory developed by Laudal in
\cite{L1}; in particular the results rely on \cite[Thm.\;4.1.14]{L1}.

\subsection {The Hilbert flag scheme} 

Let \label{defofD(d,g;s)} ${\rm D}(d,g;s)$ (resp. ${\rm D}(d,g;s)_{sc}$) be
the Hilbert-flag scheme 
parameterizing pairs $(C,S)$ of curves (resp. smooth connected curves) $C$
contained in surfaces $S$ in $\proj{3}$ with Hilbert polynomials $p(t)=dt+1-g$
and $q(t)= {t+3 \choose 3}- {t-s+3 \choose 3}$ respectively.
Then the tangent space, $A^1:=A^1(C \subset S)$, of ${\rm D}(d,g;s)$ at
$(C,S)$ is given by the Cartesian diagram (i.e. pullback or fibered product);
\begin{equation} \label{bigdia}
\begin{array}{cccccccccccc}
  & &  & \ \ A^1 &  \longrightarrow &  H^0(\sN_S) & \simeq &  H^0(\sO_S(s))
  & & &  \\
  && & \downarrow & \Box &
  \downarrow {m}  &&    \\ 0 \to H^0(\sN_{C/S}) & \rightarrow  & &  H^0(\sN_C)
  & \longrightarrow & H^0(\sN_S\arrowvert_C) &  \simeq &
  H^0(\sO_C(s)) & 
 \end{array}
\end{equation}
where the morphisms are induced by natural (or restriction) maps to normal
sheaves.

Suppose $S$ is a smooth surface of degree $s$. If $C$ is a curve on $S$, we
have $\sN_{C/S} \simeq \omega_C \otimes \omega_S^{-1}$ and a connecting
homomorphism $\delta: H^0(\sN_S\arrowvert_C) \to H^1(\sN_{C/S}) \simeq
H^0(\sO_C(s-4))^{\vee}$ continuing the lower horizontal sequence in
\eqref{bigdia}. Let $\alpha=\alpha_C:=\delta \circ m$ be the composed map and
let $A^2:= \coker \alpha$. Using \eqref{bigdia}, cf. \cite[(2.7) and
Lem.\,8]{K1} for details, we get $\dim A^1-\dim A^2=(4-s)d+g + {s+3 \choose
  3}-2$ and an exact sequence
\begin{equation} \label{alphaN} 0 \to H^0(\sI_{C/S}(s)) \to A^1 \rightarrow
  H^0(\sN_C) \rightarrow H^1(\sI_C(s)) \rightarrow \coker \alpha_C \to
  H^1(\sN_{C}) \rightarrow H^1(\sO_{C}(s))\rightarrow 0 \,.
\end{equation}
The map $ A^1 \rightarrow
H^0(\sN_C)$ in \eqref{bigdia} 
is the tangent map of the $1^{st}$ projection,
\begin{equation} \label{proj1} pr_1: {\rm D}(d,g;s) \longrightarrow {\rm
    H}(d,g)\ , \quad {\rm induced \ by} \quad pr_1((C_1,S_1))= (C_1)\, ,
\end{equation}
at $(C,S)$. Since we may view $ {\rm D}(d,g;s) $ as a relative Hilbert
scheme over ${\rm H}(d,g)$ (cf. \cite[Thm.\;24.7]{H2}), it follows that $pr_1$
is a projective morphism by \cite{G}. By \cite[Lem.\,A10]{K1} $pr_1$ is smooth
at $(C,S)$ under the assumption
\begin{equation} \label{new4.5} H^1(\sI_C(s))= 0\, .
\end{equation} 
Moreover by \cite[(2.6)]{K1} $A^2= \coker \alpha_C$ contains the obstructions
of deforming the pair $(C,S)$, cf. \cite[Thm.\,1.2.7]{K2} for a
detailed version where also the meaning of obstructions is explained.

Let $C$ be a smooth connected curve. If we suppose $d > s^2$ and $s=s(C)$,
then it is easy to see $H^0(\sI_{C/S}(s))=0$ for some hypersurface $S \supset
C$ of degree $s$, and hence, by the semi-continuity of $h^0(\sI_{C}(v))$ for
$v \in \{s-1,s\}$, that {\em the restricted} projection, $pr_1:{\rm
  D}(d,g;s)_{sc} \rightarrow {\rm H}(d,g)_{sc}$, is injective in
$pr_1^{-1}(U)$ for some neighborhood $U \subset {\rm H}(d,g)_{sc}$ of $(C)$.
An $s$-maximal (or just maximal) family $W$ of $ \HH(d,g)_{sc}$ containing
$(C)$ is therefore nothing but 
the image under $pr_1$ of an irreducible component of ${\rm D}(d,g;s)_{sc}$
containing $(C,S)$ (\cite[Def.\;1.24 and Cor.\;1.26]{K3}). If we in addition
suppose that $\alpha$ is surjective, then $(C,S)$ belongs to a unique
generically smooth 
component of ${\rm D}(d,g;s)_{sc}$ and
%
\begin{equation} \label{new4.6} \dim W = h^0(\sN_C) -
  h^1(\sI_C(s)) = (4-s)d+g + {s+3 \choose 3}-2\, .
\end{equation}
Assuming also \eqref{new4.5} it follows that $W$ is a generically
smooth irreducible
component of ${\rm H}(d,g)_{sc}$ (\cite[Thm.\,10]{K1}).

Using the infinitesimal Noether-Lefschetz theorem for $s=4$
(\cite[p.\;253]{GH}) as explained in the proof of \cite[Lem.\,13]{K1}, we
immediately get that $\alpha$ is surjective provided $S$ is smooth of degree
$s \le 4$, $d > s^2$ and $C \subset S$ is smooth and connected, but (for $s=4$
only) not a complete intersection of $S$ with some other surface. 
Hence ${\rm D}(d,g;s)$ is
smooth at $(C,S)$ and we get all conclusions above, assuming \eqref{new4.5}
for the final one. 

\begin{remark} \label{irredcomp} If \ $W $ is an {\sf irreducible component}
  of \ $ {\rm H}(d,g)_{sc}$ containing a curve $C$ sitting on a smooth surface
  $S$ of degree $s:=s(W)$ with $\alpha_C$ surjective and $d > s^2$,
  then 
  $\dim W = (4-s)d+g + {s+3 \choose 3}-2 \ge \chi(\sN_C)=4d$, i.e.
  \begin{equation} \label{gencomp} g \ge sd - {s+3 \choose 3}+2\, ,
  \end{equation} or equivalently, $
  h^1(\sI_C(s)) \le h^1(\sO_C(s))$. Moreover if the general curve of $W$
  does not satisfy \eqref{new4.5}, we get by \eqref{new4.6} that the
  component $W$ is non-reduced (i.e. not generically smooth) and that 
  \eqref{gencomp} holds. 
\end{remark}

\subsection {The relative Picard scheme} 

We also need to consider the Hilbert scheme, ${\HH}(s) \simeq \proj{{s+3
    \choose 3}-1}$, of surfaces of degree $s$ in $\proj{3}$ and the second
projection;
\begin{equation*}  
pr_2: {\rm
  D}(d,g;s) \longrightarrow {\HH}(s) , \quad {\rm induced \ by}
\quad pr_2((C_1,S_1))= (S_1)\, .
\end{equation*} 
Moreover let $\Pic$ be the relative Picard scheme over the open set in
${\HH}(s)$ of smooth surfaces of degree $s$, (see \cite{SB}). Then there is a
projection $p_2: \Pic \to {\HH}(s)$, forgetting the invertible sheaf, and a
rational map,
\begin{equation} \label{pi} \pi: {\rm D}(d,g;s)\ -\!-\!\rightarrow {\Pic} ,
  \quad {\rm induced \ by} \quad \pi((C_1,S_1))= ( \sO_{S_1}(C_1),S_1)
\end{equation} 
which is defined (and denoted $\pi_U$) on the {\it open subscheme} $U \subset
{\rm D}(d,g;s)$ given by pairs $(C_1,S_1)$ where $C_1$ is Cartier on a smooth
$S_1$. Obviously, if we restrict to $U$ we have $p_2 \circ \pi=pr_2$. If $
H^1(S,\sO_{S}(C)) \simeq H^{1}(\proj{3},\sI_{C}(s-4))^{\vee} = 0$ then $ \pi$
is smooth at $(C,S)$ by \cite[Rem.\! 4.5]{SB}. 
Indeed, $ H^1(S,\sL)= 0$, $\sL:= \sO_{S}(C)$, implies a surjective map $A^1
\to T_{\Pic,\sL}$ between the tangent spaces of ${\rm D}(d,g;s)$ at $(C,S)$
and $\Pic$ at $(\sL)$ and an injection $\coker \alpha_{C} \to \coker
\alpha_{\sL}$ on their obstruction spaces (mapping obstructions onto
obstructions), fitting into the following commutative diagram of exact
horizontal sequences 
\begin{equation} \label{tangPic}
\begin{array}{cccccccccccc}
  0 \to H^0(\sN_{C/S})  & \longrightarrow  & \ A^1 &  \longrightarrow &
  H^0(\sN_S) &  \xrightarrow{\,\alpha_{C}\,} &  H^1(\sN_{C/S}) 
  & & &  \\
  \downarrow &   &  \downarrow & &
  \|  &  & \downarrow \\ 0 \to H^1(\sO_{S})= 0 & \longrightarrow  
  &  T_{\Pic,\sL} 
  & \longrightarrow & H^0(\sN_S) & \xrightarrow{\,\alpha_{\sL}\,} & H^2(\sO_S)
  \ .  
 \end{array}
\end{equation}
Here $\alpha_{\sL}$ is the composition of $\alpha_{C}$ with the connecting
homomorphism $H^1(\sN_{C/S}) \to H^2(\sO_S)$ induced from the exact sequence
$0 \to \sO_S \to \sO_S(C) \to\sN_{C/S} \to 0$, cf. \cite[Thm.\,1]{EP},
\cite[Sect.\,4]{K4} and \cite{K98} for some details and compare with
\cite[Ex.\,10.6]{H2} and \cite{Gr}. Indeed, using \cite[Thm.\,1]{Gr} and its
proof we get the following version of the infinitesimal Noether-Lefschetz
theorem (due to Green and Voisin);
\begin{equation} \label{NLs} \dim \im \alpha_{\sL} \ge s-3 \, ,
\end{equation}
making the surjectivity of $\alpha_C$ for $s=4$ mentioned above a special case. In our applications, however, we consider divisors where a basis for $\Pic(S)$ is given, allowing us to compute 
$\dim \coker \alpha_C $ explicitly.

\begin{lemma} \label{lemGrothcomp} Let $S \subset \proj{3}$ be a smooth surface
  of degree $s$, $H$ a hyperplane section, and let $E$ and $C$ be curves on $S$
  satisfying $C \equiv eE +fH$ for some $e \ne 0, f \in \mathbb Z$. Let
  $d=d(C), g=g(C)$ and suppose
  \[H^1(\sI_E(s-4)) = H^1(\sI_C(s-4))=0 \,.\] 
\hspace{4mm} {\rm (i)} Then $ {\rm
    D}(d,g;s)$ is smooth at $(C,S)$ if and only if $ {\rm
    D}(d(E),g(E);s)$ is smooth at $(E,S)$, and
  \[ \dim {\rm D}(d(C),g(C);s)- h^1(\sO_C(s-4))= \dim {\rm D}(d(E),g(E);s)-
  h^1(\sO_E(s-4)) \, ,\] noting that \ $ \dim \arrowvert C \arrowvert
  =h^0(\sO_S(C))-1 = h^1(\sO_C(s-4))$ and \ $ \dim \arrowvert E \arrowvert =
  h^1(\sO_E(s-4))$. Moreover
  \[ \dim \coker \alpha_C + h^0(\sI_{C/S}(s-4)) = \dim \coker \alpha_E +
  h^0(\sI_{E/S}(s-4))\]

  {\rm (ii)} If $H^1(\sI_E(s))=0$ and ${\rm H}(d(E),g(E))_{sc} \ni (E)$ is a
  smooth irreducible scheme, then $ {\rm D}(d,g;s)$ is smooth at $(C,S)$ and
  every $(C',S') \in {\rm D}(d,g;s)$ satisfying $C' \equiv eE' +fH'$ for some
  $(E',S') \in {\rm D}(d(E),g(E);s)_{sc}$, $H'$ a hyperplane section of a {\sf
    smooth} surface $S' \subset \PP^3$, belongs to the unique irreducible
  component of $ {\rm D}(d,g;s)$ containing $(C,S)$.
\end{lemma}

\begin{proof} 
  (i) If $ {\rm D}(d(E),g(E);s)$ is smooth at $(E,S)$, it follows that ${\Pic}$
  is smooth at $(\sO_{S}(E),S)$ by \cite[Rem.\! 4.5]{SB} and
  $H^1(\sI_E(s-4))=0$. Then ${\Pic}$ is smooth at $ (\sO_{S}(C),S)$ because
  the local rings $\sO_{\Pic,(\sO_{S}(C),S)}$ and $\sO_{\Pic,(\sO_{S}(E),S)}$
  are isomorphic, at least up to completion. Indeed, by \cite[Prop.\,2 and
  Constr.\,2]{EP}, $ \alpha_{\sO_{S}(C)} = e \cdot \alpha_{\sO_{S}(E)}$ and
  then \eqref{tangPic} shows that the morphism between the local deformation
  functors of ${\Pic}$ at $(\sO_{S}(E),S)$ and ${\Pic}$ at $(\sO_{S}(C),S)$
  (induced by $\sO_{S}(E) \mapsto \sO_{S}(E)^{\otimes e}(f)$) is an
  isomorphism. Hence $ {\rm D}(d,g;s)$ is smooth at $(C,S)$ by
  $H^1(\sI_C(s-4))=0$, and conversely if $ {\rm D}(d,g;s)$ is smooth at
  $(C,S)$ we get that $ {\rm D}(d(E),g(E);s)$ is smooth at $(E,S)$ by the same
  argument.

  Moreover since the fiber of $\pi$ in \eqref{pi} over $(\sO_{S}(C),S)$ is the
  complete linear system $\arrowvert C \arrowvert$ on $S$ and since smooth
  morphisms have surjective tangent maps, we also get the dimension formulas,
  using duality.

  Finally to determine $\dim \coker \alpha_C$, we use \eqref{tangPic}. Since
  $H^1(\sI_C(s-4))=0$ the map $H^1(\sN_{C/S}) \simeq H^0(\sO_C(s-4))^{\vee}
  \to H^2(\sO_S) \simeq H^0(\sO_S(s-4))^{\vee}$ is injective with cokernel
  $H^0(\sI_{C/S}(s-4))^{\vee}$, whence \[ 0 \longrightarrow \coker \alpha_C
  \longrightarrow \coker \alpha_{\sO_{S}(C)} \longrightarrow
  H^0(\sI_{C/S}(s-4))^{\vee} \longrightarrow 0 \, \] is exact. Since
  $H^1(\sI_E(s-4))=0$ there is a corresponding exact sequence replacing $C$ by
  $E$, and the middle term in these sequences are isomorphic because $
  \alpha_{\sO_{S}(C)} = e \cdot \alpha_{\sO_{S}(E)}$. This implies the final
  dimension formula of (i).

  (ii) By the assumption $H^1(\sI_E(s))=0$, 
  $ {\rm D}(d(E),g(E);s)$ is smooth at $(E,S)$, cf.\,\eqref{new4.5}, whence $
  {\rm D}(d,g;s)$ is smooth at $(C,S)$ by (i). Moreover $ {\rm
    D}(d(E),g(E);s)_{sc}$ is also irreducible since one knows that $pr_1: {\rm
    D}(d(E),g(E);s)_{sc} \to {\rm H}(d(E),g(E))_{sc}$ is irreducible by
  \cite[Thm.\;1.16]{K3}). It follows that the image $U'$ of $\pi': {\rm
    D}(d(E),g(E);s)_{sc} \ \ -\!\rightarrow {\Pic}$ defined as in \eqref{pi}
  is irreducible and since the morphism
\[\eta: U' \to {\Pic} \ \ \ {\rm induced \ by\ } \ \ ( \sO_{S_1}(E_1),S_1)
\mapsto (\sO_{S_1}(E_1)^{\otimes(e)}(f),S_1) \] is smooth (in fact an
isomorphism) onto its image $U'' \subset {\Pic}$ by the argument for their
local deformation functors used in the proof of (i), we get that $U''$ is
irreducible. Finally using that the fiber $\pi_U^{-1}((\sO_{S_1}(C_1),S_1))$
of the morphism in \eqref{pi} is given by the complete linear system
$\arrowvert C_1 \arrowvert$ 
which is irreducible, we get that $\pi_U^{-1}(U'')$ is irreducible (cf. \cite[
Prop.\;1.8]{H3}). Since $(C',S') \in \pi_U^{-1}(U'')$ we are done.
\end{proof}

\begin{remark} \label{notirredcomp} Using \cite[Cor.\;4, p.\;222]{S}, or
  \cite[Prop.\;3.4]{Ek}, and, say, the smoothness of $\pi$ restricted to the
  set $U \cap {\rm D}(d,g;4)_{sc}$ accompanying \eqref{pi}, we get that a
  closed irreducible subset $W$ of \ $ {\rm H}(d,g)_{sc}$, $d > 16$, whose
  very general member $C$ sits on a smooth quartic surface $S$ with Picard
  number $\rho$, will satisfy $\dim W \le g+35-\rho$. Hence if $W \subset {\rm
    H}(d,g)_{sc}$ is 4-maximal (e.g. an irreducible component), then $\rho= 2$
  (or $\rho= 1$ in the c.i. case). 
  \end{remark}

  One should compare Remark~\ref{notirredcomp} with the following result which
  is a special case \cite[Cor.\! II 3.8]{Lop} of a theorem of A. Lopez, see
  \cite[Thm.\! II 3.1]{Lop} for a proof.

  \begin{lemma} \label{lope} Let $E \subset \proj{3}$ be a smooth irreducible
    curve, let $n \ge 4$ be an integer and suppose the degree of every minimal
    generator of the homogeneous ideal of $E$ is at most $n-1$. Let $S$ be a
    very general smooth surface of degree $n$ containing $E$ and let $H$ be a
    hyperplane section. Then $\Pic(S) \simeq \mathbb Z \oplus \mathbb Z$ and
    we may take $\{ \sO_{S}(H), \sO_{S}(E) \}$ as a $ \mathbb Z$-basis for
    $\Pic(S)$.
\end{lemma}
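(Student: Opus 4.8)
The statement to prove is Lemma~\ref{lope}, a Noether--Lefschetz-type result about the Picard group of a very general surface of degree $n$ containing a fixed smooth curve $E$ whose ideal is generated in degrees $\le n-1$.

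\medskip

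The plan is to realize $S$ as a member of the linear system $|H^0(\sI_E(n))|$ and run the standard infinitesimal/monodromy argument for Noether--Lefschetz on the family of such surfaces. First I would set up the parameter space $\mathcal{P} = \mathbb{P}(H^0(\sI_E(n)))$ of degree-$n$ surfaces containing $E$ and, passing to the open locus of smooth members, consider the universal surface $\mathcal{S} \to \mathcal{P}^{\circ}$. The classes $\sO_S(H)$ and $\sO_S(E)$ extend as flat sections over all of $\mathcal{P}^{\circ}$, so the subgroup $\Lambda := \mathbb{Z}\sO_S(H) + \mathbb{Z}\sO_S(E) \subseteq \Pic(S)$ is monodromy-invariant; one checks $\Lambda \simeq \mathbb{Z}^{\oplus 2}$ since $H$ and $E$ are not proportional (e.g. compare degrees, or use $H^2 = n$, $H\cdot E = \deg E$, and that $E$ is not a plane-section multiple). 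The point is then to show that for very general $S$ one has $\Pic(S) = \Lambda$, i.e. no ``extra'' classes appear. By the countability-of-Hodge-loci / Cattani--Deligne--Kaplan type argument (or the elementary version: the Noether--Lefschetz locus where $\Pic$ jumps is a countable union of proper closed subvarieties of $\mathcal{P}^{\circ}$, provided $\mathcal{P}^{\circ}$ is irreducible and the very general surface has no extra classes), it suffices to exhibit a single smooth $S_0 \in \mathcal{P}^{\circ}$ with $\Pic(S_0) = \Lambda$, or equivalently to show the infinitesimal Noether--Lefschetz condition holds: the image of $H^1(\Omega^1_{S})$ under the natural maps, together with the semiregularity of $E$, forces the Hodge locus of any primitive class transverse to $\Lambda$ to be proper.

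\medskip

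The cleanest route, and presumably the one intended, is to invoke the hypothesis on generator degrees directly. Since the homogeneous ideal $I_E$ is generated in degrees $\le n-1$ and $n \ge 4$, multiplication gives that $\sI_E(n)$ is globally generated and in fact $H^0(\sI_E(n))$ is large enough that the linear system $|H^0(\sI_E(n))|$ is base-point-free away from $E$ and cuts out $E$ scheme-theoretically on the general member; more importantly, a general $S \in \mathcal{P}^{\circ}$ is smooth (Bertini away from $E$, plus a local computation along $E$ using that $\sI_E$ has enough generators of degree $< n$ to separate normal directions — this is where $n-1 \ge 3$, hence $n \ge 4$, enters). With $\mathcal{P}^{\circ}$ nonempty, one then feeds this into Lopez's theorem \cite[Thm.\;II~3.1]{Lop}: that theorem is precisely a Noether--Lefschetz statement for surfaces containing a fixed curve, and under the stated generator-degree bound its conclusion is that $\Pic$ of the very general such surface is generated by $H$ and $E$. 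So structurally the proof is: (i) verify the hypotheses of \cite[Thm.\;II~3.1]{Lop} — nonemptiness and irreducibility of the family, smoothness of the general member, and the arithmetic condition on generator degrees; (ii) quote that theorem to get $\Pic(S) = \mathbb{Z}\sO_S(H) \oplus \mathbb{Z}\sO_S(E)$; (iii) observe $\{\sO_S(H),\sO_S(E)\}$ is indeed a $\mathbb{Z}$-basis, not merely a generating set, which follows from the rank-$2$ and torsion-free-ness of $\Pic(S)$ for a smooth surface in $\proj{3}$ together with linear independence of the two classes.

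\medskip

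The main obstacle — and the only genuinely substantive point — is establishing the infinitesimal Noether--Lefschetz condition underlying \cite[Thm.\;II~3.1]{Lop}, namely that the generator-degree bound $\le n-1$ is exactly what guarantees that the cup-product/Gauss--Manin map controlling deformations of Hodge classes has the requisite surjectivity (equivalently, that $E$ is semiregular in $S$ and that no hyperplane-type obstruction survives). For the purposes of this lemma, though, all of this is packaged in Lopez's theorem, so the write-up reduces to the bookkeeping in (i)--(iii); the one place to be careful is the smoothness of the general member of $|H^0(\sI_E(n))|$ along $E$ itself, which requires that among the generators of $I_E$ of degree $\le n-1$ enough remain independent modulo $I_E^2$ locally at each point of $E$ — automatic when $n - 1$ is at least the top degree of a minimal generator, which is the hypothesis.
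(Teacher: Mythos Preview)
Your proposal is correct and matches the paper's approach: the paper does not prove this lemma at all but simply records it as a special case of \cite[Cor.\;II~3.8]{Lop}, referring to \cite[Thm.\;II~3.1]{Lop} for the proof. Your write-up is considerably more detailed than what the paper provides---you sketch the Noether--Lefschetz mechanism and the bookkeeping needed to verify Lopez's hypotheses---but the substance is the same citation to Lopez.
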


Finally we will need the following lemma to prove our theorems.

\begin{lemma}\label{h1van}
  Let $S$ be a smooth projective surface containing a smooth rational curve
  $\f$ and let $D$ be a divisor such that $c=-D\cdot\f>0$ and $D-\f -K \ne 0$
  is effective, $K$ the canonical divisor.
\begin{itemize}
\item If $H^1(S,\mathcal{O}_S(D-\f))\neq 0$, then $H^1(S,\mathcal{O}_S(D))\neq
  0$.
\item If $c>1$, then $H^1(S,\mathcal{O}_S(D))\neq 0$. In fact, $\dim H^0(S,\mathcal{O}_S(D))\ge c-1$. 
\item If $c=1$ and $H^1(S,\mathcal{O}_S(D-\f))=0$, then
  $H^1(S,\mathcal{O}_S(D))= 0$.
\end{itemize}
\end{lemma}

\begin{proof}
  Taking cohomology of the exact sequence $$0\to \mathcal{O}_S(D-\f)\to
  \mathcal{O}_S(D)\to \mathcal{O}_{\f}(-c)\to 0,$$ and using duality and the
  fact that $\f\simeq \PP^{1}$, we get
$$h^1(\mathcal{O}_S(D))=h^1(\mathcal{O}_S(D-\f))+h^1(\mathcal{O}_{\PP^{1}}(-c))=h^1(\mathcal{O}_S(D-\f))+c-1$$ 
and the result follows.
\end{proof}

\subsection {On the maximum genus of space curves}

Finally we recall the definition of $G(d,s)$; the maximum genus of smooth
connected space curves of degree $d$ not contained in a surface of degree
$s-1$, cf. \cite{GP1}. By
definition, 
\begin{equation} \label{maxgen} G(d,s) = \max \{\, g(C)\, \arrowvert \,
  (C) \in \HH(d,g)_{sc} \ {\rm \ and} \ \ H^0(\sI_{C}(s-1))=0 \, \}
\end{equation}
In the case where $d > s(s-1)$, Gruson and Peskine showed in \cite{GP1} that
\begin{equation} \label{maxgen2} G(d,s) = 1+ \frac d 2 \left( \frac d s +
    s -4 \right) - \frac{r(s-r)(s-1)}{2s} \ \ \ { \rm where } \ d+r \equiv
  0 \ { \rm mod } \ s \ \ {\rm for} \ \ 0 \le r < s,
\end{equation}
and that $g(C)=G(d,s)$ if and only if $C$ is directly linked to a plane curve
of degree $r$ by a c.i. of type $(s,f)$, $f:=(d+r)/s$. Note that this
description of a curve $C$ of $\HH(d,G(d,s))_{sc}$ makes it possible to use
Theorem~\ref{thmcomp} below to find $ \dim V $
where $V \subset \HH(d,G(d,s))_{sc} $ is the irreducible component containing
$(C)$. 
Indeed, we may assume a general member of $V$ is contained in a smooth surface
of degree $s$ because the inequality $r < s$ allows us to start with a smooth
plane curve $E$ of degree $r$ contained in a smooth surface of degree $s$ and
then make a linkage via a c.i. of type $(s,f)$ to get a curve 
$C'$ which, by \cite[Ex.\,3.13]{K3}, belongs to $ V$. Since $C' \equiv fH-E$,
$H$ a hyperplane section, Theorem~\ref{thmcomp} applies and we get $\dim V$
from \eqref{maxgendim}.

\section{A criterion of unobstructedness}
We will now prove a theorem which via Remark~\ref{remthmcomp0} implies
Theorem~\ref{propcomp}. Note that Theorem~\ref{thmcomp} immediately gives us a
formula for $h^1(\sN_C)$ because $C$ is unobstructed, whence $h^1(\sN_C)= \dim
W - 4d$.

\begin{theorem} \label{thmcomp} Let $W'$ be an irreducible component of \
  ${\rm D}(d,g;s)_{sc}$ and let $W:=pr_1(W') \subseteq {\rm H}(d,g)_{sc}$. Let
  $(C,S)$ be a member of $W'$ such that $S$ is smooth of degree $s$ and
  suppose that \[H^1(\sI_C(s)) = H^1(\sI_C(s-4))=0 \,.\] Let $E$ be a curve on
  $S$, $H$ a hyperplane section and suppose $C \equiv eE +fH$ for some $e,f
  \in \mathbb Z$. Let $u:= h^0(\sI_{C/S}(s))+ h^0(\sI_{C/S}(s-4))$ and let $t$
  be the non-negative number $t: = h^1(\sN_{E})- h^1(\sO_{E}(s))$. If $E$ is
  arithmetically Cohen-Macaulay (ACM), or more generally if $E$ is
  unobstructed and satisfies $H^1(\sI_E(s)) = H^1(\sI_E(s-4))=0$, then $W$ is
  a generically smooth irreducible component of \ ${\rm H}(d,g)_{sc}$ (indeed
  $C$ and $ eE +fH$ are unobstructed) of dimension
  \begin{equation} \label{maxgendim} (4-s)d+g + {s+3 \choose 3}-2 - u +
    h^0(\sI_{E/S}(s-4)) +t \
\end{equation} 
\hspace{0.2cm} if \ $e \ne 0$\,; \ \ if \ $e=0$ \ then replace
$h^0(\sI_{E/S}(s-4))+t$ by ${s-1\choose 3}$ in \eqref{maxgendim}. 
\end{theorem}

\begin{remark} \label{remthmcomp0} Let $(C,S)$ and $W$ be as in
  Theorem~\ref{thmcomp} and suppose $d >s^2$. Then $s=s(C)$ and $W$ is an
  $s$-maximal family of ${\rm H}(d,g)_{sc}$ (see paragraph after
  \eqref{new4.5}), in which case $ h^0(\sI_{C/S}(s))$ and
  $h^0(\sI_{C/S}(s-4))$ vanish. Moreover if $E \subset S$ is a curve
  satisfying $h^1(\sN_{E}) = h^1(\sO_{E}(s))$ and $H^1(\sI_E(s)) =0$, then the
  obstruction group $\coker \alpha_E = 0$ by \eqref{alphaN}, whence $E$ is
  unobstructed by \eqref{new4.5}. Noting that $h^0(\sI_{E/S}(s-4))=
  h^0(\sI_E(s-4))$ we get Theorem~\ref{propcomp} from
  Theorem~\ref{thmcomp}. 
\end{remark}

\begin{remark} \label{remthmcomp1} If $E $ is ACM 
  then  $H^1(\sI_{E}(v))=0$ for any $v$ and $E$ is unobstructed by \cite{El}.
\end{remark}

\begin{proof} Firstly we suppose $e \ne 0$. Note that $E$ is unobstructed by
  Remark~\ref{remthmcomp1} and assumption.
  Hence using $H^1(\sI_E(s)) =0$ and the text accompanying \eqref{new4.5} it
  follows that $ {\rm D}(d(E),g(E);s)$ is smooth at $(E,S)$. By
  Lemma~\ref{lemGrothcomp} (i), $ {\rm D}(d,g;s)$ is smooth at $(C,S)$ and at
  $(C',S)$, $C':= eE +fH$. Then the smoothness of $pr_1$, due to
  $H^1(\sI_C(s))\simeq H^1(\sI_{C'}(s))=0$ shows that ${\rm H}(d,g)_{sc}$ is
  smooth at $(C)$ and $(C')$ and that $W$ is a generically smooth irreducible
  component of \ ${\rm H}(d,g)_{sc}$.

  To find $\dim W'= \dim A^1$ and hence $\dim W =\dim W' - h^0(\sI_{C/S}(s))$,
  we need to determine $\dim \coker \alpha_C$. For this we use
  Lemma~\ref{lemGrothcomp} (i) to relate $\dim \coker \alpha_C$ in terms of
  $\dim \coker \alpha_E$. Then using \eqref{alphaN}, we get $\dim \coker
  \alpha_E =t \ge 0$ and hence we obtain the dimension of $W'$ from the
  dimension formula accompanying \eqref{alphaN}. 

  Finally we suppose $e=0$. Then $C$ is a c.i. and it is well known that ${\rm
    H}(d,g)_{sc}$ is smooth at $(C)$ and $(C')$. By the smoothness of $pr_1$,
  we get that $W$ is a generically smooth irreducible component of \ ${\rm
    H}(d,g)_{sc}$. Moreover $H^1(\sN_{C}) \simeq H^1(\sO_{C}(s)) \oplus
  H^1(\sO_C(f))$ and using \eqref{alphaN}, we see that $\coker \alpha_C \simeq
  H^1(\sO_C(f))$ and we conclude the proof by $ H^1(\sO_C(f)) \simeq
  H^0(\sO_C(s-4))^{\vee}$.
\end{proof}
\begin{remark} \label{remthmcomp} {\rm (i)} If $H^1(\sN_{E})=0$,
  e.g.\;$H^1(\sO_{E}(1))=0$ and $E$ reduced, then $E$ is unobstructed and
  $t=0$ in Theorem~\ref{thmcomp}. Observe, however, that in many 
  cases where the unobstructedness of $E$ is known, there is also a dimension
  formula of $h^1(\sN_{E})$, making the number $t$ of Theorem~\ref{thmcomp}
  explicit, see e.g. \cite[Thm.\,1.1]{Krao} for a formula covering both the
  ACM and Buchsbaum diameter-1 case. 

  {\rm (ii)} 
  Assuming $h^0(\sI_{E/S}(s-4))=0$ and $t= 0$ in
  Theorem~\ref{thmcomp} we get generically smooth irreducible components of \
  ${\rm H}(d,g)_{sc}$ equipped with a dimension (``expected dimension'' if $
  h^0(\sI_{C/S}(s))=0$, i.e. $u=0$ according to \cite{K4}) which in \cite{K98}
  was considered to be ``the good general components'' in \ ${\rm
    H}(d,g)_{sc}$ in a certain range of the $d,g$-plane. These components
  correspond, at least infinitesimally, to general components in the
  Noether-Lefschetz locus, see \cite{CiLo} and its references for a discussion
  of this locus, while the components in our main Theorems~\ref{mainQ},
  \ref{mainQ5} and \ref{mainQs} have large $h^0(\sI_{E/S}(s-4))$ and
  correspond to components in the Noether-Lefschetz locus of the smallest
  codimension, see \cite{Gr,Vo, EP}.
 \end{remark}

\section {Irreducible components of ${\rm H}(d,g)_{sc}$}

In the background section we noticed that the assumption $H^1(\sI_C(s))= 0$
for $s = 4$ implies that 4-maximal subsets form generically smooth irreducible
components of ${\rm H}(d,g)_{sc}$. We are now looking for a converse, i.e.
that $H^1(\sI_C(s)) \neq 0$ for $s \le 4$ implies that $s$-maximal subsets
form non-reduced components of ${\rm H}(d,g)_{sc}$. If $s=3$ this is
essentially a conjecture that the first author partially prove in the appendix.
In this section we will see that some ideas of \cite{K1} generalize to cover
cases where $s > 3$ as well. Indeed, we will show the following result which,
together with \eqref{maxgen2}, will be used for proving some results of this
paper.

\begin{theorem} \label{main4} Let $W \subseteq \HH(d,g)_{sc}$ be a $4$-maximal
  family whose general member $C$ is contained in a smooth surface $S \subset
  \proj{3}$ of degree $4$, and suppose that $C$ is not a complete intersection
  of $S$ and some other surface. If
  $h^1(\sI_C(1)) \le d-25$ and \\[-4mm]
  \begin{equation*}  d \geq 31 \ \ \ and \ \ \  g \ > \ 21 +
    \frac{d ^{2} }{10} \  , 
  \end{equation*}
  then $W$ is an irreducible component of \ $\HH(d,g)_{sc}$. Moreover
  $W$ is non-reduced if and only if
 $$H^ 1(\sI_C(4))\neq 0\, . $$
\end{theorem}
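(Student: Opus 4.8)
The plan is to show that $W$ is an irreducible component of $\HH(d,g)_{sc}$ by a dimension count, and then separately to identify exactly when it is non-reduced via the criterion in Remark~\ref{irredcomp}. First I would set up the Hilbert-flag scheme picture from Section~2: since $d>16$, the restricted projection $pr_1\colon {\rm D}(d,g;4)_{sc}\to \HH(d,g)_{sc}$ is injective near $(C,S)$ and $W$ is the image of an irreducible component $W'$ of ${\rm D}(d,g;4)_{sc}$ through $(C,S)$. Because $C$ is not a c.i.\ in $S$, the infinitesimal Noether--Lefschetz argument recalled after \eqref{new4.6} gives that $\alpha_C$ is surjective, so $A^2=\coker\alpha_C=0$, ${\rm D}(d,g;4)$ is smooth at $(C,S)$, and by \eqref{new4.6} we get $\dim W = h^0(\sN_C)-h^1(\sI_C(4)) = g+33$ (using $s=4$ in $(4-s)d+g+\binom{s+3}{3}-2$). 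The tangent space of $\HH(d,g)_{sc}$ at $(C)$ has dimension $h^0(\sN_C) = g+33+h^1(\sI_C(4))$ by \eqref{alphaN}, so the whole question of whether $W$ is a component — and whether it is reduced — comes down to comparing $\dim W$ with the dimension of \emph{every} irreducible subset of $\HH(d,g)_{sc}$ containing $(C)$.

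The key step, and the one where the hypotheses on $d$ and $g$ and on $h^1(\sI_C(1))$ do their work, is to rule out generizations of $C$ that leave the quartic surface, i.e.\ to show no irreducible $V\supsetneq W$ exists with a general member lying only on surfaces of degree $\ge 5$. For such a $V$ one has $s(V)\ge 5$, so its general member $C'$ satisfies $H^0(\sI_{C'}(4))=0$; then either $g\le G(d,5)$, which by \eqref{maxgen2} and a comparison with the stated bound $g>21+d^2/10$ (and $d\ge 31$) is impossible, or $C'$ lies on no quartic at all and we instead bound $\dim V$ directly. The standard tool here is an upper bound of Halphen/Gruson--Peskine type: $\dim V \le $ (a function of $d$, $g$ and the postulation of $C'$), and the term $h^1(\sI_C(1))\le d-25$ feeds into exactly such a bound (controlling the speciality in low degrees). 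I would carry this out by writing $\dim V \le h^0(\sN_{C'})$ and estimating $h^0(\sN_{C'})$ via the exact sequence $0\to\sN_{C'}\to \sN_{C'}(1)^{\oplus\,?}\to\cdots$ or, more cleanly, by invoking the known bound on the dimension of families of curves not on a quartic; then the numerical inequalities force $\dim V \le g+33 = \dim W$, and maximality of $W$ with respect to $s(W)$ forces $V\subseteq W$ after all. Hence $W$ is an irreducible component.

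For the non-reduced part: once $W$ is known to be a component, Remark~\ref{irredcomp} (with $s=4$, $d>16$, $S$ smooth) applies verbatim. If $H^1(\sI_C(4))=0$ then \eqref{new4.5} holds for the general member, $pr_1$ is smooth at $(C,S)$, ${\rm D}(d,g;4)_{sc}$ is smooth there, so $W$ is generically smooth, i.e.\ reduced. Conversely, if $H^1(\sI_C(4))\ne 0$, then $\dim_{(C)}\HH(d,g)_{sc} = h^0(\sN_C) = \dim W + h^1(\sI_C(4)) > \dim W$, so the tangent space strictly exceeds the dimension of the unique component through $(C)$, and $W$ is non-reduced. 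This gives the stated equivalence.

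The main obstacle I expect is the middle paragraph — producing a sharp enough upper bound on $\dim V$ for generizations that slip off the quartic surface, and checking that the two numerical hypotheses ($d\ge 31$, $g>21+d^2/10$) together with $h^1(\sI_C(1))\le d-25$ are precisely what make that bound fall below $g+33$. This is presumably where \eqref{maxgen2} and a careful Castelnuovo-type estimate on curves of degree $d$ and genus $g$ not lying on a quartic must be combined; the rest of the argument is the deformation-theoretic bookkeeping already assembled in Section~2.
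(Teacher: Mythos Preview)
Your setup and your treatment of the non-reducedness equivalence are fine and match the paper. The gap is entirely in the middle paragraph, and it is a real one.

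First, the dichotomy you set up does not work. If $V\supsetneq W$ is a component with $s(V)\ge 5$, then its general member $C'$ lies on no quartic, and hence automatically $g=g(C')\le G(d,5)$. You write that $g\le G(d,5)$ ``is impossible'' by comparison with $g>21+d^2/10$, but this is false for large $d$: from \eqref{maxgen2} one has $G(d,5)=d^2/10+d/2+1-\epsilon$ with $0\le\epsilon\le 12/5$, so $G(d,5)>21+d^2/10$ as soon as $d\ge 45$. (This is exactly why Corollary~\ref{corrmain4} splits into two ranges.) So the $G(d,5)$ shortcut does not dispose of the problem, and the theorem really needs a direct upper bound on $\dim V$.

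Second, the direct bound you gesture at --- ``Halphen/Gruson--Peskine type'', ``Castelnuovo-type estimate'', or bounding $h^0(\sN_{C'})$ via an exact sequence --- is not the mechanism the paper uses, and I do not see how to make your suggestions yield the needed inequality. The paper's argument is this: first use $g>G(d,6)$ (which \emph{does} follow from $g>21+d^2/10$) to force $s(V)=5$ exactly. Then apply Proposition~\ref{20} with $s=5$; that proposition bounds $\dim V$ by $\binom{8}{3}-1=55$ plus the maximum of three explicit quantities, coming from Clifford's theorem and the Hodge index theorem applied to $\sN_{C'/F'}$ on a (possibly singular) quintic $F'\ni C'$ (Proposition~\ref{cliffo}). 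The three hypotheses of the theorem kill the three branches: $g>21+d^2/10$ kills both $\lfloor d^2/5\rfloor-g$ and $\lfloor d^2/10\rfloor$, while $h^1(\sI_C(1))\le d-25$ together with semi-continuity ($h^1(\sI_{C'}(1))\le h^1(\sI_C(1))$) kills the third branch $-d+g-1+h^0(\sO_{C'}(1))=-d+g+3+h^1(\sI_{C'}(1))$. Note in particular that the role of the $h^1(\sI_C(1))$ hypothesis is precisely to control this last term via semi-continuity, not to feed into a postulation bound on $C$ itself. Each branch then contradicts $\dim W=g+33<\dim V$, so no such $V$ exists.
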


\begin{remark} \label{remmain4} 
  Let $C$ be a curve contained in a smooth quartic surface $S$. Using $ \sI_C = \ker
 ( \sO_{ \proj{}} \to \sO_C) $ and the exact sequence $$0 \to \sO_{
    \proj{3}}(-4) \to \sI_C \to \sI_{C/S} \to 0 \ ,$$ we get 
\begin{equation*}
  H^1(\sI_C(4)) \simeq H^1(\sI_{C/S}(4)) \ \ {\rm and} \ \  H^1(\sO_C(4))
  \simeq H^2(\sI_C(4)) \simeq H^2(\sI_{C/S}(4))\, .
\end{equation*}
Moreover since $\sI_{C/S} \simeq \sO_{S}(-C)$, we have by duality
$H^ i(\sI_{C/S}(4))^{\vee} \simeq H^{2-i}(\sO_{S}(C-4H))$, $H$ a
hyperplane section. So to explicitly find non-reduced components given by
Theorem~\ref{main4}, one should look for curves C on $S$ such that $
H^{i}(\sO_{S}(C-4H)) \ne 0$, for $i=0,1$ i.e. such that the linear
system $\arrowvert C-4H \arrowvert$ contains fixed
components, or $C-4H$  is composed with a pencil (cf. \cite{SD}).
\end{remark}

To prove the theorem we will need

\begin {proposition} \label{cliffo} Let $F$ be an integral surface in
  $\proj{3}$ of degree $s \geq 4$, let $S \to F$ be a desingularization and
  let $C \subset F$ be a smooth connected curve of degree $d$ and genus $g$
  such that $Sing(F) \cap C$ is a finite set. If $\sN_{C/S}$ is the normal
  sheaf of $C \hookrightarrow S$ (i.e. of the proper transform of $C
  \hookrightarrow F$)
  and ${\rm Hilb}(F)_{sc}$ is the Hilbert scheme of smooth connected curves on $F$, then \\[-3mm]
\begin{equation*}
  \dim_{(C)} {\rm Hilb}(F)_{sc} \ \le \ \dim\,H^0(\sN_{C/S}) \ \le \  \max \,\{
  \,{\frac{d ^{2} }{s}}   -   g   
  + 1   ,  {\frac{d ^{2} }{2s}}  + 1\,\}\, .
\end{equation*}
\end{proposition}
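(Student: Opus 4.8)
The plan is to estimate $h^0(\sN_{C/S})$ directly on the smooth surface $S$, using the identification $\sN_{C/S} \simeq \omega_C \otimes \omega_S^{-1}$ valid for $C$ a Cartier divisor (here $C$ is the proper transform of the smooth curve on $F$, which meets $Sing(F)$ in a finite set, so $S \to F$ is an isomorphism near $C$ and the transform is again smooth of the same degree and genus $d,g$). First I would bound $\dim_{(C)}{\rm Hilb}(F) \le h^0(\sN_{C/S})$: since the curves on $F$ near $C$ avoid the singular locus (a closed condition failing on an open neighbourhood), their Hilbert scheme is locally that of curves on the smooth model $S$, whose tangent space at $(C)$ is $H^0(\sN_{C/S})$. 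This is the easy inequality. The substance is the second inequality.

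For that, write $L := \sO_S(C)$, so $\sN_{C/S} = L|_C$ and $\deg(L|_C) = C^2 =: D$. By Riemann-Roch on $C$, $h^0(\sN_{C/S}) = D - g + 1 + h^1(L|_C)$, so it suffices to bound $h^1(L|_C) = h^0(\omega_C \otimes L^{-1}|_C)$ together with $D$. The key geometric input is Clifford's theorem (hence the name of the proposition): I would split into two cases according to whether $L|_C$ is special on $C$. If $h^1(L|_C) = 0$ then $h^0(\sN_{C/S}) = D - g + 1$, and I bound $D = C^2$ using the Hodge index theorem on $S$ relative to the hyperplane class $H$ (with $H^2 = s$): from $(C\cdot H)^2 \ge (C^2)(H^2)$ and $C\cdot H = d$ we get $C^2 \le d^2/s$, giving $h^0(\sN_{C/S}) \le d^2/s - g + 1$. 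If instead $L|_C$ is special, Clifford's inequality gives $h^0(L|_C) \le \tfrac{1}{2}\deg(L|_C) + 1 = \tfrac12 D + 1$, and again $D \le d^2/s$ yields $h^0(\sN_{C/S}) \le d^2/(2s) + 1$. Taking the maximum of the two cases gives exactly the stated bound.

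The main obstacle I expect is making the Hodge-index step fully rigorous in this mildly singular setting: on the (possibly non-minimal, possibly with many exceptional curves) desingularization $S$, the pullback $H$ of the hyperplane class is only \emph{nef} and big rather than ample, so I must be careful that the inequality $(C\cdot H)^2 \ge (C^2)(H^2)$ still holds for the proper transform $C$ — this follows because $C$ is not contracted and $H$ is big and nef, but it should be checked (e.g. via the Hodge index theorem applied after passing to the ample model, using that $C$ moves to a class meeting $H$ positively). A secondary point to be careful about is that $C^2$ computed on $S$ really equals the self-intersection governing $\sN_{C/S}$, which is immediate from $\sN_{C/S} = \sO_S(C)|_C$, and that $\deg \sN_{C/S} = C^2$ is non-negative (else $h^0 = 0$ and the bound is trivial), so the two cases above are genuinely exhaustive. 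Everything else — Riemann-Roch, Clifford, the adjunction identity $\sN_{C/S}\simeq\omega_C\otimes\omega_S^{-1}$ — is standard and I would not grind through it.
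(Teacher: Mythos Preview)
Your proposal is correct and follows essentially the same route as the paper's proof: the paper explicitly cites \cite[Lem.\,22 and 23]{K1} and records exactly the two cases you describe (Riemann--Roch when $H^1(\sN_{C/S})=0$, Clifford's theorem when both $H^0$ and $H^1$ are nonzero) together with the Hodge index bound $C^2\le d^2/s$. Your additional care about $H$ being only nef and big on the desingularization, and about the trivial case $h^0(\sN_{C/S})=0$, is well placed and fills in details the paper leaves to the reference.
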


\begin{proof} The first inequality is  \cite[Lemma 22]{K1}. In \cite[Lemma
  23]{K1} we use  Hodge's index theorem to show that $\deg \sN_{C/S} = C^2 \le
  d^2/s$. Therefore \ $H^1(\sN_{C/S}) = 0$ implies  $h^0(\sN_{C/S}) = \chi
  (\sN_{C/S}) = d^2/s +1 -g$ by Riemann-Roch while if $H^1(\sN_{C/S}) \ne 0$
  then  Clifford's theorem gives \ $\dim \arrowvert \sN_{C/S} \arrowvert \le
   \frac {1}{2} \deg \sN_{C/S} \le d^2/2s$, i.e.  $h^0(\sN_{C/S}) \le d^2/2s+1$.
\end{proof}


In the proposition below we extend \cite[Prop.\;20]{K1}, which should have
assumed ``$Sing(F) \cap C$ finite" or $d > (s(C)-1)^2$, from $s=4$ to $s \ge
4$.

\begin {proposition} \label{20} Let V be an irreducible component of
  $\HH(d,g)_{sc}$ whose general curve $C$ sits on some integral surface $F$ of
  degree $s \geq 4$. If 
  $d > s^2$, then
\begin{equation*}
  \dim\, V  \le \  {s+3 \choose 3}- 1 + \max \,\{ \,{\frac{d ^{2} }{s}}-g\,
  , \ {\frac{d ^{2} }{2s}}\, , \ (4-s)d+g-1+h^0(\sO_C(s-4)) \, \}\, .
\end{equation*}
\end{proposition}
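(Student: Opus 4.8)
\textbf{Proof proposal for Proposition~\ref{20}.}

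The plan is to bound $\dim V$ from above by relating it to the Hilbert-flag scheme ${\rm D}(d,g;s)$ and then invoking Proposition~\ref{cliffo}. First I would pass to a surface: since the general curve $C$ of $V$ lies on an integral surface $F$ of degree $s$, the pair $(C,F)$ (or rather $(C,S)$ after desingularizing, but working on $F$ directly is cleaner for the flag scheme) lies in ${\rm D}(d,g;s)$, and $V$ is dominated by an irreducible component $W'$ of ${\rm D}(d,g;s)$ via $pr_1$. Because $d>s^2>(s-1)^2$ we have $H^0(\sI_{C/F}(s))=0$ (a degree-$s$ surface through $C$ must be $F$ itself, as in the text accompanying \eqref{new4.5}), so $pr_1$ is generically injective on $W'$ and $\dim V=\dim W'$. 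Hence it suffices to bound $\dim W'$, equivalently $\dim A^1$ at a general $(C,F)$.

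Next I would estimate $\dim A^1$ using the Cartesian square \eqref{bigdia}: $A^1$ fits as a fibre product of $H^0(\sN_C)\to H^0(\sN_S|_C)\simeq H^0(\sO_C(s))$ with $H^0(\sN_S)\simeq H^0(\sO_S(s))\to H^0(\sO_C(s))$, so
\[
\dim A^1 \ \le \ \dim H^0(\sN_{C/S}) + \dim H^0(\sO_S(s)) \ = \ \dim H^0(\sN_{C/S}) + {s+3\choose 3} - 1,
\]
the last equality because $h^0(\sO_S(s))={s+3\choose 3}-h^0(\sI_S(s))={s+3\choose 3}-1$ for an integral surface $S$ (resp.\ its desingularization — one must be slightly careful here, but the dimension count on the ambient surface side still contributes at most ${s+3\choose 3}-1$). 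Now apply Proposition~\ref{cliffo} (whose hypotheses are met: $F$ integral of degree $s\ge 4$, $C$ smooth connected, and $Sing(F)\cap C$ finite, which follows from $d>s^2$ by a general-position/Bertini-type argument since a general such $C$ in its family cannot be forced through the singular locus), giving $\dim H^0(\sN_{C/S})\le\max\{d^2/s-g+1,\ d^2/(2s)+1\}$. Combining yields
\[
\dim V \ \le \ {s+3\choose 3} - 1 + \max\Big\{\tfrac{d^2}{s}-g,\ \tfrac{d^2}{2s}\Big\}+1,
\]
which is most of the claimed bound but is missing the third term $(4-s)d+g-1+h^0(\sO_C(s-4))$ inside the max.

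The third term comes from a different regime and is where the real work lies. When $H^1(\sN_{C/S})\ne 0$ and also the obstruction map $\alpha_C$ is far from surjective, the crude bound above can be improved — or rather, one needs the complementary estimate coming from \eqref{alphaN} and the dimension identity $\dim A^1-\dim A^2=(4-s)d+g+{s+3\choose 3}-2$. Since $V$ is an irreducible \emph{component} of $\HH(d,g)_{sc}$, it is unobstructed along its general point in the sense that $\dim V\ge \dim(\text{tangent space})-\dim(\text{obstruction space})$; feeding $h^0(\sN_C)\le \dim V + h^1(\sN_C)$ and controlling $h^1(\sN_C)$, together with $H^0(\sI_{C/S}(s))=0$ and the map $A^1\to H^0(\sN_C)$ having cokernel inside $H^1(\sI_C(s))$, one extracts $\dim V\le {s+3\choose 3}-1+(4-s)d+g-1+h^0(\sO_C(s-4))$ via the chain in \eqref{alphaN} (using $h^1(\sO_C(s))=h^0(\sO_C(s-4))$ by duality on curves together with adjunction, wait — more precisely $h^1(\sI_C(s))\le \ldots$, and the end of \eqref{alphaN} relates $\coker\alpha_C$ to $H^1(\sN_C)$ and $H^1(\sO_C(s))$). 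I would then take the maximum of the two estimates to conclude. The main obstacle is precisely this case analysis — deciding, as in \cite[Lemmas 22, 23 and Prop.\,20]{K1}, which of the three bounds is operative and verifying the "$Sing(F)\cap C$ finite" hypothesis holds for the general member of $V$ given only $d>s^2$; the rest is bookkeeping with \eqref{bigdia} and \eqref{alphaN}.
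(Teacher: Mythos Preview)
Your overall architecture (pass to the flag scheme, use $d>s^2$ to get $\dim V=\dim W$, bound the fibre by $h^0(\sN_{C/S})$ and invoke Proposition~\ref{cliffo}) is the right one, but two things go wrong.

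First and most importantly, your derivation of the third term $(4-s)d+g-1+h^0(\sO_C(s-4))$ is off track. In the paper this term does \emph{not} come from obstruction theory or the exact sequence \eqref{alphaN}; it is simply the exact value of $h^0(\sN_{C/F})$ when $F$ is \emph{smooth}. Indeed, in that case $\sN_{C/F}\simeq\omega_C\otimes\omega_F^{-1}\simeq\omega_C(4-s)$, so $\chi(\sN_{C/F})=(4-s)d+g-1$ and $h^1(\sN_{C/F})=h^0(\sO_C(s-4))$ by Serre duality on $C$, whence $h^0(\sN_{C/F})=(4-s)d+g-1+h^0(\sO_C(s-4))$. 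The organizing principle of the proof is a case split on whether $F$ is smooth (use this exact formula) or singular integral (use Proposition~\ref{cliffo}); the three terms in the $\max$ arise from the two cases, and one then takes the maximum to cover both.

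Second, your bound for the first two terms is off by $1$. Using only $\dim A^1\le h^0(\sN_{C/S})+{s+3\choose 3}-1$ together with Proposition~\ref{cliffo} gives ${s+3\choose 3}+\max\{d^2/s-g,\,d^2/(2s)\}$, one more than claimed. The paper recovers the missing unit in the \emph{non-smooth} case by using the second projection $pr_2:{\rm D}(d,g;s)_{sc}\to\HH(s)$: since $F$ is singular, $pr_2$ cannot dominate $\HH(s)$, so $\dim pr_2(W)\le{s+3\choose 3}-2$, and then $\dim W\le\dim pr_2(W)+\dim_{(C)}{\rm Hilb}(F)$ combined with Proposition~\ref{cliffo} gives exactly ${s+3\choose 3}-1+\max\{d^2/s-g,\,d^2/(2s)\}$.

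Finally, your ``Bertini-type'' justification for $Sing(F)\cap C$ finite is not the argument: the paper observes that if $C\subset Sing(F)$ then $C$ lies in the complete intersection of two partial derivatives of the defining form of $F$, a c.i.\ of type $(s-1,s-1)$, forcing $d\le(s-1)^2<s^2$.
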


\begin{proof} Let $W$ be any irreducible component of $ {\rm D}(d,g;s)_{sc}$
  containing $(C,F)$. Since the $2^{nd}$ projection, $pr_2: {\rm
    D}(d,g;s)_{sc} \rightarrow \ {\HH}(s)$ has the Hilbert scheme ${\rm
    Hilb}(F)_{sc}$ as its fiber over $(F)$, it follows that
\begin{equation} \label{dimpr2}
 \dim W \ \le \ \dim pr_2(W) + \dim_{(C)} {\rm Hilb}(F)_{sc}
\end{equation} 
where $pr_2(W)$ is the scheme theoretic image of $ pr_2$ restricted to $W$.
Indeed endowing $W$ with its reduced induced scheme structure, we may look
upon the induced map $pr_2': W \to pr_2(W)$ as a morphism between integral
schemes whose fiber over $(F)$ is at least contained in ${\rm Hilb}(F)_{sc}$. We
get \eqref{dimpr2} by the fact that the dimension of every component in a
fiber of $pr_2'$ over $(F)$ is not smaller than the relative dimension of
$pr_2'$, cf. \cite[Ch.\,II, Ex.\,3.22]{H1}.

Suppose $F$ is {\em smooth}. Then $ \dim pr_2(W) \le \dim_{(F)} \HH(s)={s+3
  \choose 3}- 1$.
Moreover, 
$\sN_{C/F} \simeq \omega_C \otimes \omega_F^{-1}$ leads to
$ \chi(\sN_{C/F}) = \chi( \omega_C(4-s)) = (4-s)d+g-1 $ and 
\begin{equation*} \label{smoo}
 \dim_{(C)} {\rm Hilb}(F)_{sc} \le h^0(\sN_{C/F}) = (4-s)d+g-1 + h^0(\sO_C(s-4))\, .
\end{equation*}
Suppose $F$ is {\em not smooth, but integral}, then $pr_2$ is at least
non-dominating, whence $ \dim pr_2(W) \le {s+3 \choose 3}- 2$. To use
Proposition~\ref{cliffo} to bound $ \dim_{(C)} {\rm Hilb}(F)_{sc}$, we must show
that $Sing(F)\cap C$ is a finite set. Indeed if this set is not finite, then
the smooth connected curve $C$ is contained in $Sing(F)$ which implies $d \le
(s-1)^2$ because there is a c.i. of type $(s-1,s-1)$ containing $C$
(chosen among the partial derivatives of the form
defining $F$). This contradicts an assumption of Proposition~\ref{20} while
the other assumptions imply the existence of an irreducible component $W \ni
(C, F)$ of ${\rm D}(d,g;s)_{sc}$ which dominates $V$ under the first
projection $pr_1$ given in \eqref{proj1}.
Since $d > s^2$ then 
$\dim V = \dim W$ and we can use \eqref{dimpr2} and the upper bounds of $
\dim_{(C)} {\rm Hilb}(F)_{sc}$ to get Proposition~\ref{20}.
\end{proof}


\begin{proof}[Proof (of  Theorem~\ref{main4})]
  
  To see that $W$ is an irreducible component, we suppose there exists a
  component $V$ of $\HH(d,g)_{sc}$ satisfying $W \subset V$ and $\dim W < \dim
  V$. Then $s:=s(V) \ge 5$ by the definition of a 4-maximal family. Moreover
  $s=5$ since the case $s \geq 6$ can be excluded. Indeed using \eqref{maxgen}
  and \eqref{maxgen2} we get $g > G(d,6)$ from $21+d^2/10 > 1+d^2/12+d$ and
  the assumptions of the theorem. To get a contradiction we will use
  Proposition~\ref{20} for $s = 5$, and \eqref{new4.6} that implies $\dim W
  =g+33$. Indeed $\alpha_C$ is surjective by the infinitesimal
  Noether-Lefschetz theorem and by assumption, see the paragraph before
  Remark~\ref{irredcomp}, and we get \eqref{new4.6}. Let $C'$ be the general
  curve of $V$. Then $s(C')= 5$ and $C'$ is a smooth connected curve. It
  follows that a surface $F'$ containing $C'$ of the least possible degree,
  namely 5, is integral. 
We get \\[-2mm]
\[ g+33 < 55 + \max \,\{ \, \lfloor {\frac{d ^{2} }{5}}\rfloor -g\, , \
\lfloor {\frac{d ^{2} }{10}} \rfloor \, , \ -d+g-1+4+h^1(\sI_{C'}(1)) \,
\}\, . \] Suppose the maximum to the right is obtained by $\lfloor d ^{2}/5
\rfloor -g$. Then since $ g+33 < 55 + d ^{2}/5-g$ is equivalent to $ g < 11 +
d ^{2}/10$, we get a contradiction to the displayed assumption of the theorem.
Similarly, $ g+33 < 55 + \lfloor d ^{2}/10 \rfloor $ will lead to a
contradiction. Finally if we suppose $$g+33 < 55 -d+g-1+4+h^1(\sI_{C'}(1))
\, ,$$ i.e. $h^1(\sI_{C'}(1)) > d-25$ and we use that $h^1(\sI_{C'}(1)) \le
h^1(\sI_{C}(1))$ by semi-continuity, we get $h^1(\sI_{C}(1)) > d-25$ which
again is a contradiction to the assumptions. Thus we have proved that $W$ is
an irreducible component of $\HH(d,g)_{sc}$.

Then using \eqref{new4.6}, i.e. $ \dim W + h^1(\sI_C(4)) = h^0(\sN_C)$,
it is straightforward to get the final statement of the theorem, and we
are done.
\end{proof}
\begin{corollary} \label{corrmain4} With notations and assumptions as in the
  first sentence in Theorem~\ref{main4}, suppose in addition
\begin{equation*} \label{ineqcor} g \ > \ \min \{\, G(d,5)-1,\, \frac{d ^{2}
  }{10}  + 21 \} \ \ {\rm and} \ \ \ d \geq 21 \, .
\end{equation*}
Then $W$ is an irreducible component of \ $\HH(d,g)_{sc}$, and $W$ is
non-reduced if and only if $H^ 1(\sI_C(4))\neq 0. $
\end{corollary}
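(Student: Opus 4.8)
The idea is that this is a weakening of Theorem~\ref{main4}: in the range where $\min\{G(d,5)-1,\ d^2/10+21\}$ equals $d^2/10+21$ the statement will follow from that theorem once its two remaining hypotheses are seen to be automatic, while in the complementary range one exploits $g\ge G(d,5)$ together with the Gruson--Peskine description of extremal curves. Throughout, assume for contradiction that $W$ is not an irreducible component, so it is properly contained in an irreducible component $V$ of $\HH(d,g)_{sc}$ and $\dim W<\dim V$. By $4$-maximality $s:=s(V)\ge5$, and if $C'$ denotes the general member of $V$ then $s(C')=s\ge5$; hence $H^0(\sI_{C'}(4))=0$ and $g=g(C')\le G(d,5)$ by \eqref{maxgen}.

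First I would dispose of the case $g>d^2/10+21$ (so that the minimum equals $d^2/10+21$). Using the explicit value \eqref{maxgen2} of $G(d,5)$, the inequality $d^2/10+21\le G(d,5)-1$ becomes a linear bound forcing $d$ to be large; one finds $d\ge45$, in particular $d\ge31$. It remains to check $h^1(\sI_C(1))\le d-25$. As $g>d^2/10+21>d-3$, the hyperplane series $|\sO_C(1)|$ is special---otherwise $h^0(\sO_C(1))=d+1-g<4$, contradicting the non-degeneracy of $C$ (which holds since $s(C)=4$)---so Clifford's theorem gives $h^0(\sO_C(1))\le d/2+1$ and hence $h^1(\sI_C(1))=h^0(\sO_C(1))-4\le d/2-3\le d-25$, using $d\ge44$. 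Then Theorem~\ref{main4} applies verbatim and delivers both assertions, excluding $V$.

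Next, the case $g>G(d,5)-1$, i.e.\ $g\ge G(d,5)$; together with $g\le G(d,5)$ this forces $g=G(d,5)$. By the Gruson--Peskine characterisation of curves of maximal genus (recalled after \eqref{maxgen2}) we get $s=5$, and $C'$ is directly linked to a smooth plane curve $E$ of degree $r$, with $d+r\equiv0\pmod5$ and $0\le r<5$, by a complete intersection of type $(5,f)$, $f=(d+r)/5$; moreover the family of such curves is irreducible, so we may take it to be $V$. Being a plane curve, $E$ is ACM, hence so is $C'$ (linkage preserves the ACM property), so $H^1(\sI_{C'}(n))=0$ for all $n$. If $d>25$ this lets me invoke Theorem~\ref{thmcomp} with $s=5$, $C=C'$ and $C'\equiv fH-E$ (so $e=-1\ne0$; for $r=0$, where $E=\varnothing$ and $C'\equiv fH$ is a complete intersection, use the $e=0$ clause instead). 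Since $E$ is a plane curve of degree at most $4$ one has $h^0(\sI_{E/S'}(1))=h^0(\sI_E(1))\le2$, $h^1(\sO_E(5))=0$ and $h^1(\sN_E)\le1$, so \eqref{maxgendim} yields
\[
\dim V = (4-5)d+g+\binom{8}{3}-2+h^0(\sI_{E/S'}(1))+\bigl(h^1(\sN_E)-h^1(\sO_E(5))\bigr)\ \le\ g+33=\dim W .
\]
For the finitely many values $21\le d\le25$ (where $d\le s^2$ and the $s$-maximal-family formalism is unavailable) I would instead compute $\dim V$ directly, as the dimension of the above linkage family---parametrised by the plane curve $E$, then a $(5,f)$-complete intersection through $E$, minus the fibre dimension of the residuation map, which is governed by $h^0(\sI_{C'}(5))$ as computed from liaison---and verify $\dim V\le g+33$ in each of these cases. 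Either way $\dim V\le\dim W$, contradicting $\dim W<\dim V$, so no such $V$ exists and $W$ is an irreducible component of $\HH(d,g)_{sc}$.

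Finally, as $C$ is not a complete intersection in $S$ and $d>16$, \eqref{new4.6} gives $\dim W=h^0(\sN_C)-h^1(\sI_C(4))=g+33$, with $h^0(\sN_C)$ the dimension of the tangent space of $\HH(d,g)_{sc}$ at $(C)$; since $W$ is a component passing through its general point $(C)$, it is generically smooth precisely when these two numbers coincide, i.e.\ precisely when $h^1(\sI_C(4))=0$ (cf.\ Remark~\ref{irredcomp}). The step I expect to be the main obstacle is the low-degree range $21\le d\le25$ in the third paragraph, where the general machinery breaks down and the dimension of the extremal linkage family must be pinned down by hand; verifying the two auxiliary hypotheses of Theorem~\ref{main4} in the large-genus case is comparatively routine.
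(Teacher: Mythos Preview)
Your proposal is correct and follows essentially the same route as the paper. Both arguments split according to which term realises the minimum (equivalently, $d\ge 45$ versus $21\le d\le 44$): in the first range one checks that the auxiliary hypotheses of Theorem~\ref{main4} hold automatically via Clifford's inequality, and in the second one forces $g=G(d,5)$ and bounds $\dim V$ through the Gruson--Peskine description of extremal curves combined with formula~\eqref{maxgendim} from Theorem~\ref{thmcomp}. The only difference is in the low-degree tail: rather than leaving $21\le d\le 25$ to a separate enumeration, the paper still applies~\eqref{maxgendim} there, simply keeping the correction term $u=h^0(\sI_{C'/S'}(5))$ (which need not vanish when $d\le s^2=25$) and observing that it is large enough to push $\dim V\le g+33$; this is exactly the computation your ``direct'' approach would unwind.
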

\begin{proof} One checks that the minimum value in the corollary is equal to
  $G(d,5)-1$ (resp. $21+d^2/10$) for $21\le d \le 44$ (resp. $d \ge 45$).
  Moreover, $h^1(\sI_{C}(1)) = h^0(\sO_{C}(1))-4 \le \max \{d-g, \frac d 2 \}
  -3$ for a non-plane curve by Clifford's theorem and Riemann-Roch. Hence if
  $d \ge 45$, we get $h^1(\sI_{C}(1)) \le d -25$ and we conclude by
  Theorem~\ref{main4}.

  If \ $21 \le d \le 44$ we suppose there is an irreducible component $V$ of
  $\HH(d,g)_{sc}$ satisfying $W \subset V$ and $\dim W < \dim V$. We may
  suppose either $g > G(d,5)$ or $g=G(d,5)$. In the first case we get $s(V)=4$
  which contradicts the 4-maximality of $W$. In the remaining case, using
  \eqref{maxgen2} and the arguments after \eqref{maxgen2}, we may assume the
  general curve $C'$ of $V$ is linked to a smooth plane curve $E$ of degree $r
  < 5$ by a c.i. of type $(5,(d+r)/5)$ where the quintic surface $S$ in the
  c.i. is smooth. Hence we can apply Theorem~\ref{thmcomp} (replacing $W$ in
  Theorem~\ref{thmcomp} by $V$) with $s=5$ and $e=-1$ (and $e=0$ when $r=0$)
  to compute $\dim V$. Since $\sN_E \simeq \sO_E(1) \oplus \sO_E(r)$, whence
  $H^1(\sN_E) \simeq H^1(\sO_E(1))$ we easily compute $ h^0(\sI_{E/S}(1)) +t$
  in Theorem~\ref{thmcomp} to be $4 - \chi(\sO_E(1)) \le 2$. We get $ \dim V
  \le 56-d+g - h^0(\sI_{C'/S}(5))$ (resp. $ \dim V \le 58-d+g$ for $r=0$) by
  Theorem~\ref{thmcomp}. In particular we have $ \dim V \le 58-d+g$ for $d \ge
  25$ which contradicts $g+33=\dim W < \dim V$. For $21 \le d \le 24$ we have
  by \eqref{maxgen2} at least two quintic surfaces in the c.i. $Y$ linking
  $C'$ to $E$. Thus $h^0(\sI_{C'/S}(5)) \ge 1$. In the case $d=21$ we get
  $h^0(\sI_{C'/S}(5))= 2$ because $\omega_E \simeq \sI_{C'/Y}(6)$ imply
  $h^0(\sI_{C'/Y}(5))= h^0(\sO_E)=1$. Hence $\dim V \le
  56-d+g-h^0(\sI_{C/S}(5))$ implies $\dim V \le g+33$, and we have a
  contradiction. Finally using the left equality of \eqref{new4.6} we easily
  get the statement on non-reducedness of the corollary, and we are done.
\end{proof}
\begin{remark} 
  Let $C$ be a general curve of a $3$-maximal family
  $W$. 
  The analogue of Theorem~\ref{main4} for $s(C)=3$ states that $W$ is an
  irreducible 
  component of \ $\HH(d,g)_{sc}$ provided $$g > 7+ (d-2)^2/8 \ {\rm and} \ d
  \ge 27 $$
  as one may easily deduce from the proof of \cite[Theorem 5]{K1}, paying a
  little extra attention to the case
  $(d,g)=(30,106)$. 
  To show that $W$ is a non-reduced irreducible component, the above result
  turned out to be quite useful in \cite{K1}. This result, together with
  Theorem~\ref{main4} for $s(C)=4$, improve upon what we may show by only
  using \eqref{maxgen2} by $k+d/2$, $k$ a constant, cf.
  Corollary~\ref{corrmain4}. This improvement is not necessary for
  Theorem~\ref{mainQ} of this paper because the curves in {\rm II)} satisfy $g
  > G(d,5)-1$. We need, however, Theorem~\ref{main4} in the appendix, and we
  hope it, or a refined version, applies to other classes of components where
  $s(C) = 4$, as it did for $s(C) =
  3$. 
\end{remark} 

\section{Components of ${\rm H}(d,g)_{sc}$ for $s=4$}
In this section we prove Theorem~\ref{mainQ} stated in the
introduction. Let us start by considering the existence of the quartic
surfaces that we need in the sequel, together with determining the smooth connected curves contained in the
surfaces. 

%

\subsection{Quartic surfaces containing a line}\label{Pic2}
Our main example comes from studying curves on smooth quartic surfaces containing a line. Such quartic surfaces appeared in the work of Mori \cite{Mor},
who showed the following result: If there exists a smooth quartic surface
$S_0$ containing a nonsingular curve $\f_0$ of degree $d$ and genus $g$, then
there also exists a smooth quartic surface $S$ containing a smooth curve $\f$
of the same degree and genus, such that $\Pic(S) \simeq \ZZ \f \oplus \ZZ H$,
where $H$ is the hyperplane section. (See also \cite[p. 138]{H2}).

The following result describes the main properties of curves on such quartics (see also \cite[Section 3.1]{ottem}).

\begin{proposition}\label{moriq}
  There exists a smooth quartic surface $S\subset \PP^3$ with $H \equiv
  \f_1 + \f_2$ and $\Pic(S) \simeq \ZZ\f_1 \oplus \ZZ\f_2$ where $\f_1,\f_2$
  are smooth curves of genus 0 and 1 respectively, and intersection matrix
  given by
$$ \left(
 \begin{array}{cc}
   \f_1^{2} & \f_1\cdot \f_2  \\
   \f_1\cdot \f_2 & \f_2^{2}
 \end{array}
 \right) =
\begin{pmatrix}
-2 & 3 \\
3 & 0
\end{pmatrix} \ .
$$Furthermore, for any such surface $S$ with $H,\f_1, \f_2$ as above the following hold:
\begin{enumerate}[i)]
\item Any effective divisor class can be written as $a\f_1+b\f_2$ for
  non-negative integers $a,b\ge 0$.
\item A divisor class $a\f_1+b\f_2$ is nef if and only if $3b\ge 2a\ge 0$.
\item If $D\equiv a\f_1+b\f_2$ is a divisor with $3b\ge 2a> 0$, then the
  general element in $|D|$ is a smooth irreducible curve. Conversely, the
  classes of the irreducible divisors correspond to classes $a\f_1+b\f_2$ with
  $3b\ge2a> 0$ or $(a,b)=(1,0),(0,1)$.
\end{enumerate}
\end{proposition}


\begin{proof}
  Smooth quartic surfaces $S_0$ containing a line $\{x_0=x_1=0\}$ are defined
  by a homogeneous polynomial of the form $F=x_0 p+x_1q=0$ where $p,q\in
  k[x_0,x_1,x_2,x_3]$ are cubic forms. By Mori's result above there exists a
  smooth quartic surface $S$ such that $\Pic(S)$ is generated by the classes
  of a smooth rational curve $\f_1$ and the hyperplane section $H$. By the
  adjunction formula, we have $\f_1^2=-2$. In fact the diophantine equation
  $(xH+y\f_1)^2=4x^2+2xy-2y^2=2(2 x - y) (x + y)=-2$ has the only solutions
  $(0,\pm 1)$, showing that $\f_1$ is the unique $(-2)$-curve on $S$. The
  class $H-\f_1$ has self-intersection $0$ and is thus effective. It is in
  fact represented by the smooth elliptic curve given by
  $\{x_0=q=0\}$. 

  To prove $i)$ we claim that every effective divisor is linearly equivalent
  to a non-negative integral linear combination of $\f_1$ and $\f_2$. Indeed,
  let $D$ be any effective divisor class and write $D=a\f_1+b\f_2$ for
  integers $a,b$. We may assume that $D\cdot \f_1 \ge 0$ (otherwise $\f_1$ is
  a fixed component of the linear system $|D|$ and we can instead consider
  $D-\f_1$). Then we have $0\le D\cdot\f_1=3b-2a$ and $0\le D\cdot \f_2=3a$
  implying that $a,b\ge 0$. Dually we have also shown that the nef cone is
  determined by the inequalities $a\ge 0$ and $3b\ge 2a$, giving $i)$ and
  $ii)$.

  $iii)$: If $C$ is an irreducible curve with $C\neq \f_1,\f_2$, then $C$ is nef
  and $C\cdot \f_2>0$ (by the Hodge index theorem). So $C\equiv a\f_1+b\f_2$
  with $3b-2a\ge 0$ and $a> 0$. Conversely, if these conditions are satisfied,
  the divisor $D=a\f_1+b\f_2$ is base-point free \cite[Corollary 3.2]{SD} and
  hence by Bertini's theorem the general element in $|D|$ is smooth and
  irreducible. 
\end{proof}

For the existence of such a K3 surface one could also use a result of
Nikulin \cite{Nik} which states that for any even lattice of signature
$(1,\rho-1)$ with $\rho\le 10$, there exists a smooth projective K3 surface
with this intersection form. Using this, and the embedding criteria of
Saint-Donat \cite{SD}, one can show that any surface with intersection matrix
as above embeds as a smooth quartic
surface. 

\bigskip

To prove Theorem~\ref{mainQ} 
it is necessary to determine  when $H^1(S,\mathcal{O}_S(D))\neq 0$. 
If $D=a\f_1+b\f_2$ is effective, then by the proposition above, we must have
$a,b\ge 0$. If $a=0$ then
$h^1(S,\mathcal{O}_S(D))=h^1(S,\mathcal{O}_S(b\f_2))=\max\{b-1,0\}$. We will
assume $a>0$. If now $c:=-D\cdot \f_1=2a-3b\le 0$, then $D$ is nef and
$D^2=a(3b-2a)+3ab>0$ and so $h^1(S,\mathcal{O}_S(D))=0$. If $c=1$, then $a>1$
(due to $1=2a-3b$) and $D-\f_1$ is nef with $(D-\f_1)^2>0$ and so
$H^1(\mathcal{O}_S(D-\f_1))=0$ and consequently $H^1(S,\mathcal{O}_S(D))=0$ by
Lemma \ref{h1van}. The same lemma implies $H^1(S,\mathcal{O}_S(D))\neq 0$ for
$D \ne \f_1$ and $c>1$. Hence we obtain

\begin{proposition}Let $S$ be a smooth quartic surface with $\Pic(S) \simeq
  \ZZ\f_1\oplus \ZZ\f_2$ and $\f_1,\f_2$ and $H$ as above and suppose
  $D=a\f_1+b\f_2$ is an effective divisor class with $a>0$. Then for $D \ne
  \f_1$,
\begin{equation} \label{2a3b}
H^1(S,\mathcal{O}_S(D))\neq 0 \qquad {\rm if \ and \ only \ if} \qquad 2a>3b+1.
\end{equation}Moreover  $H^1(S,\mathcal{O}_S(\f_1))= 0$ and if $a=0$, then $h^1(S,\mathcal{O}_S(D))=\max\{b-1,0\}$. 
\end{proposition}

\begin{proof}[Proof of Theorem~\ref{mainQ}]  
  We get $d=a+3b$, $g=3ab-a^2+1$ from from $d=C \cdot H$, $g=1+C^2/2$ and
  since $C \notin \arrowvert nH \arrowvert$ for every $n \in \mathbb Z$ by
  assumption, it follows that $C$ is not a c.i. in $S$. By \cite[Thm.\! 10 and
  Lem.\;13]{K1}, see Section 2.1 of this paper, the Hilbert-flag scheme ${\rm
    D}(d,g;s)$ for $s=4$ is smooth at $(C,S)$ of dimension $\dim A^1=g + 33$.
  Hence $(C,S)$ belongs to a unique irreducible component of ${\rm
    D}(d,g;4)_{sc}$ whose image under the $1^{st}$ projection, $pr_1: {\rm
    D}(d,g;4)_{sc} \rightarrow {\rm H}(d,g)_{sc}$, is 
  the 4-maximal subset $W$ of the theorem because the assumption $d > 16$
  implies 
  $s(W)=4$.

  By Lemma~\ref{lemGrothcomp} we have the properties of $W$ stated in
  Theorem~\ref{mainQ} provided we can take a $\mathbb Z$-basis of $\Pic(\tilde
  S)$ as in the theorem. Since we may assume that $\tilde S$ is very general
  by using the projections $pr_i$, $i=1,2$ and the very general assumption
  concerning $W$, this is straightforward by Lemma~\ref{lope}. Indeed there is
  a hyperplane section $\tilde H$ of $\tilde S$ containing $\tilde E$ such
  that $\f:=\tilde H- \tilde E$ is a smooth curve of degree 3 (\cite{SD}) and
  instead of the basis $\{\sO_S(\tilde E),\sO_S(\tilde H) \}$ given by
  Lemma~\ref{lope}, we may take the classes of $\{\tilde E,\f \}$ as a
  $\mathbb Z$-basis of $\Pic(\tilde S)$.

For the rest of the proof we use \eqref{2a3b}, \eqref{ineq}
as in Corollary~\ref{corrmain4}, 
and
Theorem~\ref{propcomp}.

I) By Theorem~\ref{propcomp} it suffices to show that $
H^{1}(S,\sO_S(C-4H))=0$ 
because $h^1(I_C(4))= h^{1}(\sO_S(C-4H))$. But this is immediate from
\eqref{2a3b} since the inequality $4 < a < \frac{3b} {2} - 1$ implies $a-4 >0$
and $2(a-4) \le 3(b-4)+1$, and the case $(a,b)=(5,4)$ corresponds to
$C-4H=\f_1$.

II) By Corollary~\ref{corrmain4} it suffices to show that
$H^1(S,\sO_S(C-4H))\neq 0$ for the classes in (\ref{pic2}) of
Theorem~\ref{mainQ}. Also this is immediate from \eqref{2a3b}, since
(\ref{pic2}) and $d > 16$ imply $2(a-4)>3(b-4)+1$, $a-4 >0$ and $(a-4,b-4) \ne
(1,0)$.
The lattice points
in this region satisfying (1) are then found by inspection.

For the statement on the dimension of the tangent space of
$\HH(d,g)_{sc}$ at $(C)$, we know that this dimension is equal to $g+33+
h^1(I_C(4))$ by \eqref{new4.6}. We claim that $h^1(I_C(4))=1$ (resp.
$h^1(I_C(4))=2$, $h^1(I_C(4))=4$) for the family $a)$ (resp. $b),c)$). To see
it we use the short exact sequence in the proof of Lemma \ref{h1van} for $\f:=
\f_1$ and $D:=C-4H$ (and then to $D:=C-4H-\f_1$ for the class b)). Taking
cohomology 
and counting dimensions, we get the claim.
\end{proof}

\begin{remark} Note that if $D:=C-4H=a\f_1+b\f_2$ is effective and
  $h^1(S,\sO_S(D))>0$, then either $\f_1$ is a fixed component of $|D|$ or $D$
  is composed with a pencil, in which case $C=4\f_1+r\f_2$ for $r\ge 6$. In
  the latter case, it is easy to verify that $C$ does not satisfy the
  constraints \eqref{ineq}, hence does not lead to non-reduced components by the
  theory we have so far.
%
\end{remark}

\subsection{Other quartic surfaces }

The surface appearing in Theorem \ref{mainQ} is an example of a quartic
surface for which we can use the theory of this paper to describe the
smoothness properties of the components of the Hilbert scheme. In fact, by the
result of Mori quoted above, it is clear that there should exist many such
examples, but finding ones with irreducible curves satisfying the bound
\eqref{ineq} seems more difficult. Nevertheless, let us finish our study of
curves on a smooth quartic by giving the main details of one more class.

Consider a homogeneous quartic form of the form  $F=x_0p+q_1q_2$ where $q_1,q_2$ are quadrics defining the plane conics and $p$ is a cubic. For $q_1,q_2$  general, $F$ defines a smooth quartic surface $S_0 \subset \PP^3$, where the hyperplane section splits into two plane conics $(\{x_0=q_1=0\}$ and $(\{x_0=q_2=0\} $). Then by Mori's theorem, for $q_1,q_2$  very general, one obtains a smooth quartic surface with the intersection matrix $(\f_i\cdot\f_j)=\left(\begin{smallmatrix}-2 & 4\\
    4 & -2 \end{smallmatrix}\right)$. (Again we choose the basis
$\{\sO_S(\f_1),\sO_S(\f_2)\}$ for $\Pic(S)$ rather than
$\{\sO_S(H),\sO_S(\f_1)\}$.) As before, one can show that $\f_1,\f_2$ define
smooth irreducible $(-2)$-curves which generate the semigroup of effective
divisors.

    
\begin{proposition} \label{othercl} There exists a smooth quartic surface $S$
  with $\Pic(S) \simeq \ZZ\f_1\oplus \ZZ\f_2$ and $\f_1,\f_2$, $H\equiv \f_1+\f_2$ as
  above.
\begin{enumerate}[i)]
\item Any effective divisor class can be written as $a\f_1+b\f_2$ for
  non-negative integers $a,b\ge 0$.
\item A divisor class $a\f_1+b\f_2$ is nef if and only if $ \frac {b} {2} \le
  a \le 2b$.
\item If $D\equiv a\f_1+b\f_2$ is a divisor with $a,b>0$, then
  $H^1(S,\sO_S(D))=0$ if and only if $D$ is nef.
\item If $D\equiv a\f_1+b\f_2$ is a divisor with $0<\frac {b} {2} \le a \le
  2b$, then the general element in $|D|$ is a smooth irreducible curve.
  Conversely, the classes of the irreducible curves correspond to classes
  $a\f_1+b\f_2$ satisfying $a,b>0$ and $\frac {b} {2} \le a \le 2b$ or
  $(a,b)=(1,0),(0,1)$.
\end{enumerate}
  \end{proposition}\begin{proof}
    The first part of the proposition and $i)$ follow as in the
    proof of Proposition \ref{moriq}. If $D=a\f_1+b\f_2$ is nef and non-zero,
    then intersecting with $\f_1$ and $\f_2$ gives the above inequality for
    $ii)$. In $iii)$, if $D$ is nef and $\not \equiv 0$, we have
    $D^2=8ab-2a^2-2b^2= 2a(2b-a)+2b(2a-b)>0$ and so by the
    Kawamata-Viehweg vanishing theorem, $H^1(S,\sO_S(D))=0$. Conversely, if
    $D$ is not nef, then we can without loss of generality assume
    $d=-D.\f_1>0$. But $d$ must be an even number, hence $d>1$ and so
    $H^1(S,\sO_S(D))\neq 0$ by Lemma \ref{h1van}.
\end{proof}
Note that Proposition~\ref{othercl} (iii) allows us to see exactly when
$h^1(\sI_C(4))=h^1(S,\sO_S(C-4H))=0$, 
and we get at least:
\begin {proposition} \label{mainQ2} Let $S \subset \proj{3}$ be a smooth
  quartic surface with $ \Gamma_1, \Gamma_2 $ as above, let $C \equiv
  a\f_1+b\f_2$ be a smooth connected curve and suppose $a \ne b$ and $d > 16$.
  Then $C$ belongs to a unique $4$-maximal family $W \subseteq \HH(d,g)_{sc}$.
  Moreover if $\tilde S$ is a quartic surface containing a very general member
  of $W$, then $\Pic(\tilde S)$ is freely generated by the classes of two
  rational conics, and every $C \equiv a\f_1+b\f_2$ contained in some surface
  $S$ as above belongs to $W$. Furthermore $\dim W = g+33$, \\[-3mm] $$
  \hspace{1.2cm} d=2a+2b \ , \ \ \ g=4ab-a^2-b^2+1 \ \ \ and $$
  
$W$ is a generically smooth, irreducible component of \
  $\HH(d,g)_{sc}$ provided \
  {\large $  \frac {b} {2} +2 \le a  \le  2b - 4 . $} \\[-2mm]


\end{proposition}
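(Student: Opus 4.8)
The plan is to run the proof of Theorem~\ref{mainQ} with only cosmetic changes, using in place of the line $\Gamma_1$ there the smooth plane conic $E:=\Gamma_1$ --- which is again ACM, hence unobstructed with $H^1(\sI_E(n))=0$ for all $n$ --- and the relation $a\Gamma_1+b\Gamma_2=(a-b)E+bH$, valid since $\Gamma_2=H-\Gamma_1$. First I would read off the numerics from $d=C\cdot H$, $g=1+\tfrac12C^2$ and the intersection matrix: $d=2a+2b$ and $g=4ab-a^2-b^2+1$. Since $a\ne b$ we have $C\not\equiv nH$ for all $n$, so $C$ is not a complete intersection in $S$; and $d>16$ forces $H^0(\sI_C(3))\simeq H^0(\sO_S(3H-C))=0$, indeed $h^0(\sI_C(4))=1$, so $S$ is the only quartic through $C$ and $s(C)=4$. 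By \cite[Thm.~10 and Lem.~13]{K1} (see Section~2) the Hilbert-flag scheme ${\rm D}(d,g;4)_{sc}$ is smooth at $(C,S)$ of dimension $\dim A^1=g+33$; its unique component through $(C,S)$ maps under the locally injective morphism $pr_1$ onto the $4$-maximal family $W$, so $\dim W=g+33$.

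For the uniqueness and Picard assertions I would reproduce the $\pi,\pi',\eta$ argument from the proof of Theorem~\ref{mainQ}. As there, $H^1(S,\sO_S(C))\simeq H^1(\proj{3},\sI_C)^{\vee}=0$ because $C$ is connected, so the rational map $\pi\colon{\rm D}(d,g;4)_{sc}\dashrightarrow\Pic$ is smooth at $(C,S)$ and its image is $33$-dimensional near $(\sO_S(C),S)$, the fibre $|C|\cap{\rm D}(d,g;4)_{sc}$ being $g$-dimensional. The conic $E$ has $h^0(\sO_S(E))=1$ (Riemann--Roch on the K3 surface, with $\Gamma_1^2=-2$) and $H^1(\proj{3},\sI_E)=0$, so $\pi'\colon{\rm D}(2,0;4)_{sc}\dashrightarrow\Pic$, $(E_1,S_1)\mapsto(\sO_{S_1}(E_1),S_1)$, restricts to an isomorphism on a $33$-dimensional neighbourhood $U'$ of $(E,S)$; since $a-b\ne0$, the deformation-functor argument in the first paragraph of the proof of Theorem~\ref{thmcomp} (where $\alpha_{\sO_S(C)}=(a-b)\,\alpha_{\sO_S(E)}$) shows the morphism $\eta$ given by $(\sO_{S_1}(E_1),S_1)\mapsto(\sO_{S_1}(E_1)^{\otimes(a-b)}(b),S_1)$ is an isomorphism onto its image $\eta(U)$ on a neighbourhood $U\subset\pi'(U')$ of $(\sO_S(E),S)$. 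Using $a\Gamma_1+b\Gamma_2=(a-b)E+bH$, the irreducibility of ${\rm D}(2,0;4)_{sc}$ (its fibres over the irreducible Hilbert scheme of smooth conics being open in linear systems), and that the smooth map $\pi$ sends generic points to generic points, Lemma~\ref{lope} applied to a conic $\tilde E$ gives $\Pic(\tilde S)=\mathbb Z\langle\sO_{\tilde S}(\tilde H),\sO_{\tilde S}(\tilde E)\rangle$ for the unique quartic $\tilde S$ containing a very general member of $W$; replacing $\{\tilde E,\tilde H\}$ by $\{\tilde E,\tilde H-\tilde E\}$ --- two classes of self-intersection $-2$ and degree $2$, represented by smooth rational conics via \cite{SD} --- yields the stated basis, and the irreducibility of $\pi^{-1}(\eta(U))$ shows that every $C\equiv(a,b)$ on such a surface $S$ belongs to the same $W$.

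For the final statement, by Proposition~\ref{propcomp} --- applicable since $d>16$, $W$ is $4$-maximal and $C$ is not a c.i. --- it suffices to show $H^1(\sI_C(4))=0$. By Remark~\ref{remmain4} this equals $H^1(\sO_S(C-4H))^{\vee}$, and $C-4H=(a-4)\Gamma_1+(b-4)\Gamma_2$ has $(C-4H)\cdot\Gamma_1=2(2b-a-4)$ and $(C-4H)\cdot\Gamma_2=2(2a-b-4)$; hence the hypothesis $\tfrac{b}{2}+2\le a\le2b-4$ is precisely $(C-4H)\cdot\Gamma_1\ge0$ together with $(C-4H)\cdot\Gamma_2\ge0$ (and forces $b\ge4$), which by Proposition~\ref{othercl} of the appendix gives $H^1(\sO_S(C-4H))=0$; geometrically, $C-4H$ is then nef, and since also $(C-4H)\cdot H=d-16>0$ it is big. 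Proposition~\ref{propcomp} then gives that $W$ is a generically smooth irreducible component of $\HH(d,g)_{sc}$ of dimension $(4-4)d+g+\binom{7}{3}-2=g+33$.

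The step I expect to be the real work is not any single computation but the faithful transplanting of the Picard-scheme bookkeeping of Theorem~\ref{mainQ}: checking that ${\rm D}(2,0;4)_{sc}$ is irreducible and that $\pi'$ is a local isomorphism at $(E,S)$ when $E$ is a conic rather than a line, and then exhibiting on $\tilde S$ a hyperplane section whose residual to a conic $\tilde E$ is again a smooth conic, so that the basis ``two rational conics'' is geometrically --- not merely numerically --- realized. Once Proposition~\ref{othercl} is in hand, the remaining cohomology input is routine.
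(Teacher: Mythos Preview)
Your proposal is correct and follows essentially the same route as the paper: the paper's proof simply says ``the properties of the maximal family $W$ follow as in the first part of Theorem~\ref{mainQ}'' and then invokes Proposition~\ref{propcomp} together with Proposition~\ref{othercl} to get $H^1(\sO_S(C-4H))=0$ exactly under $\tfrac{b}{2}+2\le a\le 2b-4$, which is precisely what you do. One small point: Proposition~\ref{othercl}(v) is stated for divisors with both coefficients strictly positive, so you should note that the hypotheses $\tfrac{b}{2}+2\le a\le 2b-4$ together with $a\ne b$ force $a>4$ and $b>4$ (the boundary $a=4$ or $b=4$ would give $a=b=4$), not merely $b\ge 4$.
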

%

\begin{proof} 
The proof of the properties of the
  maximal family $W$ follows as in the first part of Theorem~\ref{mainQ}. The
  remaining part follows from Theorem~\ref{propcomp} and
  Proposition~\ref{othercl} since $H^1(S,\sO_S(C-4H))= 0$ if and only if
  $C-4H$ is nef if and only if $\frac{b+4}2\le a \le 2(b-2)$.
\end{proof}

\begin{remark} \label{Q2class} {\rm (i)} We may by symmetry restrict the range
  of Proposition~\ref{mainQ2} to $a > b$. Then there are 4 families in the
  range $ 2b-4 < a \le 2b$
  which satisfy $H^1(\sI_C(4)) \neq 0$. 
  They are of the form $(5+2k,4+k)$ $(6+2k,4+k)$ $(7+2k,4+k)$ $(8+2k,4+k)$, $k
  \ge 1$.
  Unfortunately, \eqref{ineq} does not hold for any of these classes, so we
  can not conclude that they correspond to non-reduced components by the
  results we have so far. We {\sf expect}, however, that they are non-reduced
  components. 

 {\rm (ii)} We are informed by H. Nasu 
 that he, using methods appearing in \cite{N, MuNa,Na} is able to show that
 the family $(5+2k,4+k)$ corresponds to non-reduced components of ${\rm
   H}(d,g)_{sc}$, and that these methods also apply to show the non-reducedness
 of family a) of Theorem~\ref{mainQ}, cf. Remark~\ref{MNa}.
\end{remark}

\section{Components of ${\rm H}(d,g)_{sc}$ for $s=5$}

Let $S$ be a very general smooth surface of degree 5 in $\PP^3$ defined by an
equation $x_0P+x_1Q$, where $P,Q$ are very general homogeneous degree-4
polynomials. Let $\f_1=\{x_0=x_1=0\}$ (a line) and $\f_2=\{x_0=Q=0\}$ (a plane
quartic). The hyperplane section $H\in|\mathcal{O}_{\PP^3}(1)|_S|$ satisfies
$H\equiv K \equiv \f_1+\f_2$ and $H^2=5$ where $K$ is the canonical divisor,
and we may suppose $\Pic(S) \simeq\ZZ \f_1 \oplus \ZZ \f_2$ by
Lemma~\ref{lope}. Then $\f_1\cdot H=1$, $H^2=5$ and the adjunction formula
imply that
the intersection matrix is $(\f_i\cdot\f_j)=\left(\begin{smallmatrix}-3 & 4\\
    4 & 0 \end{smallmatrix}\right)$. 
    
    Let $C\subset S$ be a smooth, connected curve of degree $d$ and genus $g$ with $C\equiv a\f_1+b\f_2$. We have $d=C \cdot H$,
$g=1+(C^2+C\cdot K)/2$,  and we deduce $$d=a+4b \ \ {\rm and} \ \ g=1+4ab +
\frac{1}{2}(a+4b-3a^2)\, .$$

As in the case of quartic surfaces, we easily deduce  the following result:
\begin{lemma} \label{lemnef5}
Any effective divisor on $S$ is linearly equivalent to $a\f_1+b\f_2$ where $a,b\ge 0$. Every nef divisor is linearly equivalent to  $a\f_1+b\f_2$ where $4b\ge 3a\ge 0$. 
\end{lemma}

\def\O{\mathcal{O}}

It will be of interest to study the divisor $4\f_1+3\f_2$, which is on the boundary of the nef cone.

\begin{lemma}\label{43bpf}
Let $S$ be a quintic surface with $\f_1,\f_2$ as above. Then the divisor $D=4\f_1+3\f_2$ is base-point free. Moreover, for each $m\ge 1$, $|mD|$ contains a smooth irreducible curve.
\end{lemma}

\begin{proof}
  Choose global sections $x$ and $y_1,y_2$ as bases of $H^0(S,\sO_S(\f_1))$
  and $H^0(S,\sO_S(\f_2))$ respectively. Note that as $\f_2$ is base-point
  free, so is the linear system $V=\langle
  y_1^3,y_1^2y_2,y_1y_2^2,y_2^3\rangle\subseteq H^0(S,\sO_S(3\f_2))$. Note
  that $x^4\cdot V=\langle
  x^4y_1^3,x^4y_1^2y_2,x^4y_1y_2^2,x^4y_2^3\rangle\subseteq H^0(S,\sO_S(D))$,
  so if $D$ has a base-point, it is necessarily contained in $\f_1$. But a
  general divisor $M\in |D|$ does not intersect $\f_1$: In particular this is
  true for the curve $M=\{P=Q=0\}$, for $P,Q$ general. The last part now
  follows from Bertini's theorem since $mD$ is not composed with a pencil.
\end{proof}

\begin{lemma}\label{h1vanish5}
Let $C$ be a general element of the linear system $|a\f_1+b\f_2|$, where $4b\ge 3a$ and $a > 1$. Then $C$ is a smooth irreducible curve with $H^i(S,\mathcal{O}_S(C))=0$ for $i>0$.
\end{lemma}

\begin{proof}
  Since the inequality $4b\ge 3a \ge 0$ describes exactly the nef cone of $S$,
  i.e., $C\cdot \f_i\ge 0$ for $i=1,2$, it follows that $C$ is a nef divisor.
  Assume first that $C$ is ample, i.e., that $4b>3a> 0$, then 
  $C-K=C-\f_1-\f_2$ is nef, and big by $a > 1$ which implies $b>1$, and
  so by Kawamata-Viehweg,
  $H^i(S,\mathcal{O}_S(C))=H^i(S,\mathcal{O}_S(K+(C-K)))=0$ for $i>0$.
  Moreover, in this case, Bertini's theorem gives that the general element is
  smooth and irreducible, since $|C|$ is base-point free.

It remains to consider the case $C\equiv 4m\f_1+3m\f_2$. It was shown above
that $C$ is smooth and irreducible. Let $D\in | 4\f_1+3\f_2|$ be a general
smooth element. Then we get $H^1(S,\O_S(D))=0$ by the exact sequence
$$
0\to \O_S(3\f_1+3\f_2)\to \O_S(D)\to \O_{\f_1}\to 0 \, .
$$
Moreover, there is also an exact sequence
$$
0\to \O_S((m-1)D)\to \O_S(mD)\to \O_S(mD)|_D\to 0 \, .
$$By induction on $m$, $H^1(S,\O_S((m-1)D))=0$. Also, a computation gives that
$\O_S(mD)|_D$ has degree $>2g(D)-2$ on $D$ for $m \ge 2$, so $H^1(D,
\O_S(mD)|_D)=0$. Hence $H^i(S,\mathcal{O}_S(mD))=0$ for $i>0$.
\end{proof}

\begin{proof}[Proof of Theorem~\ref{mainQ5}] 


  If $E:=\f_1$, we get $H^1(\sI_E(v))=0$ for any $v$, whence ${\rm D}(1,0;5)$
  is smooth at $(E,S)$ by \eqref{new4.5}. Then Lemma~\ref{lemGrothcomp} and
  \eqref{alphaN} imply that the Hilbert-flag scheme ${\rm D}(d,g;5)_{sc}$ is
  smooth of dimension $\dim A^1=\dim \coker \alpha_C -d+g + 54$ at $(C,S)$
  because $ H^1(\sI_C(1)) \simeq H^1(\sO_S(C))^{\vee} = 0\, $ by Lemma~\ref{h1vanish5}, since $C$ is smooth and irreducible. 
  Hence $(C,S)$ belongs to a unique irreducible component of ${\rm
    D}(d,g;5)_{sc}$ whose image under the $1^{st}$ projection, $pr_1: {\rm
    D}(d,g;5)_{sc} \rightarrow {\rm H}(d,g)_{sc}$, is 
  a $5$-maximal subset $W$ because the assumption $d > 25$ implies $s(W)=5$.
  This $W$ is the one given in the theorem. Thanks to Lemma~\ref{lemGrothcomp}
  we get the properties of $W$ stated in Theorem~\ref{mainQ5}. Also $\dim
  \coker \alpha_C = 2$ is easily found using Lemma~\ref{lemGrothcomp}, and we
  get $ \dim W=-d+g + 56$.

  Now to get I) it suffices by \eqref{new4.5} to show $ H^1(\sI_C(5))=0$.
  Since $ H^1(\sI_C(5))^{\vee} \simeq H^1(\sO_S(C+K-5H))= H^1(\sO_S(C-4H))\, $
  this group vanishes by Lemma~\ref{h1vanish5}: Indeed $C-4H \equiv
  (a-4)\f_1+(b-4)\f_2$ satisfies $4(b-4)\ge 3(a-4)$ and $a-4>1$ by the
  assumption $5 < a < \frac{4b} {3} - 1$ of I).

  To get II) we will show $g-G(d,6) > 0$ where $ G(d,6) = 1+ d + d^2/12 -
  5r(6-r)/{12}$ and $0 \le r < 6$ are given by \eqref{maxgen2}. Since $g=
  1+8n+24n^2$ and $d=16n$ it is straightforward to get $g - (1+ d + d^2/12)=
  8n(n-3)/3$, whence $g-G(d,6) > 0$ for $n > 3$ and $g=G(d,6)$ for $n=3$. Then
  we can use exactly the arguments in the $2^{nd}$ paragraph of the proof of
  Corollary~\ref{corrmain4} to see that $W$ is an irreducible component of $
  \HH(d,g)_{sc}$, i.e. we only need to show that $\dim W \ge \dim V$ for the
  irreducible component $V$ of $\HH(d,g)_{sc}$ containing a curve $C'$
  of maximum genus: $g=G(d,6)=241$ where $d=48$. By \eqref{maxgen2} the curve
  $C'$ is a c.i. of type $(6,8)$, whence with dualizing sheaf $\omega_{C'}
  \simeq \sO_{C'}(10)$. Then we conclude by $\dim
  V=4d+h^1(\sO_{C'}(6))+h^1(\sO_{C'}(8))=237$ and $\dim W =-d+g+56=249$.

  This component is non-reduced if we can show $\dim W < h^0(\sN_C)$ for $C$
  general. Since $\dim W = \dim A^1$, $H^0(\sI_{C/S}(5)) = 0$ and $\dim \coker
  \alpha_C = 2$ it suffices by \eqref{alphaN} to prove $h^1(\sI_C(5)) \ge 3$.
  This follows from the exact sequence in the proof of Lemma~\ref{h1van},
  because $\f_1 \simeq \proj{1}$.
\end{proof}

\section{Components of ${\rm H}(d,g)_{sc}$ for $s \ge 5$}

Let, as in the case of quintic surfaces, $S$ be a very general smooth surface
of degree $s \ge 5$ in $\PP^3$ defined by an equation $x_0P+x_1Q$, where $P,Q$
are very general homogeneous polynomials of degree $s-1$. Let
$\f_1=\{x_0=x_1=0\}$ and $\f_2=\{x_0=Q=0\}$. The hyperplane section satisfies
$H \equiv \f_1+\f_2$, $H^2=s$ and we may suppose $\Pic(S) \simeq\ZZ \f \oplus
\ZZ H$ by Lemma~\ref{lope}. If $C \equiv a\f_1+b\f_2$ then $d=C \cdot H$,
$K=(s-4)H$ and the adjunction formula imply that the intersection matrix is
$(\f_i\cdot\f_j)=\left(\begin{smallmatrix}2-s & s-1\\
    s-1 & 0 \end{smallmatrix}\right)$, and
that 
\begin{equation} \label{sge5}
 d=a+(s-1)b \ \ {\rm and} \ \ g=1+(s-1)ab +
\frac{1}{2}((s-4)a+(s-4)(s-1)b-(s-2)a^2)\, .
\end {equation}
The first two lemmas of Section 6 generalize easily and we get
\begin{lemma} \label{nefbp}
Any effective divisor on $S$ is linearly equivalent to $a\f_1+b\f_2$ where
$a,b\ge 0$. Every nef divisor is linearly equivalent to  $a\f_1+b\f_2$ where
$(s-1)b\ge (s-2)a\ge 0$.  
\end{lemma}

%

\begin{lemma}\label{s43bpf}
  Let $S$ be a smooth surface of degree $s$ with $\f_1,\f_2$ as above. Then
  the divisor $D=(s-1)\f_1+(s-2)\f_2$ is base-point free and $|mD|$ contains a
  smooth irreducible curve for each $m\ge 1$. Moreover if $C \equiv
  a\f_1+b\f_2$ is 
  any divisor satisfying $C \cdot \f_1 > 0$ and $a >1$ then $|C|$ contains a smooth irreducible curve.
\end{lemma}

Indeed, for the final sentence we remark that $C-H$ is nef, hence base-point
free since it is a linear combination of base-point free divisors,
$\f_2,H,(s-1)\f_1+(s-2)\f_2$, with non-negative coefficients. 

Even though it seems that we do
not get a result similar to Lemma~\ref{h1vanish5} in full generality, we can
at least use the first paragraph of its proof to get
\begin{equation} \label{KVvan} H^i(S,\mathcal{O}_S(C))=0 \ \ {\rm for} \ i>0 \
  \  {\rm provided} \  \ a>s-4 \  {\rm and} \ (s-1)b \ge (s-2)a+s-4 
\end{equation}
because the assumptions on $a,b$ imply that $C-K$ is nef and big. From this, we are led to

\begin {theorem} \label{mainQs} Let $S \subset \proj{3}$ be a smooth
  degree-$s$ surface containing a line $ \Gamma_1$, let $\Gamma_2 \equiv H -
  \Gamma_1$ 
  be a smooth curve and suppose $\Pic(S) \simeq\ZZ \f_1 \oplus \ZZ \f_2$ and  $s \ge 5$. Let
  $C \equiv a\f_1+b\f_2$ 
  be a smooth connected curve of degree $d >
  s^2$ 
  with  $a \ne b$.

\item {\rm (i)} Suppose $ a>s-4$ and $(s-1)b \ge (s-2)a+s-4$. Then $C$ belongs
  to a unique $s$-maximal family $W \subseteq \HH(d,g)_{sc}$. Moreover if
  $\tilde S$ is a degree-$s$ surface containing a very general member of $W$,
  then $\Pic(\tilde S)$ is freely generated by the classes of a line and a
  smooth plane degree-$(s-1)$ curve, and every $C \equiv a\f_1+b\f_2$
  contained in 
  some surface $S$ as above belongs to $W$. Furthermore
  \[\dim W =(4-s)d+g + {s+3 \choose 3}+ {s-1 \choose 3} -s+ 1 \ \ with \ d,g \
  as \ in \ \eqref{sge5}, \]  
  and if $(a,b) \ne (2s-2,2s-4)$ for $s=5,6$, then $W$ is an
  irreducible component of \ $\HH(d,g)_{sc}$.
\item {\rm (ii)} Suppose $s < a < $ {\Large$\frac{(s-1)(b-1)}{s-2}$}. Then all
  conclusions of {\rm (i)} hold and $W$ is a generically smooth irreducible
  component  of \ $\HH(d,g)_{sc}$.
\end{theorem}

\begin{proof}

The proof follows the proof of the first part and I) of
Theorem~\ref{mainQ5} except for $W$ being an irreducible component in (i). Let us go through the main points. 

The assumptions on $a,b$ in (i) imply that
$H^1(\sO_S(C))=0$ by the Kawamata-Viehweg vanishing theorem \eqref{KVvan}. Moreover, if we replace $(a,b)$ in (i) by $(a-4,b-4)$, we get exactly the assumptions of (ii), leading also to $H^1(\sO_S(C-4H))=0$.
Given the first vanishing, we now use Lemma~\ref{lemGrothcomp} to get the
stated properties of $W$ in (i). We also get $$ \dim \coker \alpha_C =
h^0(\sI_{E/S}(s-4)= h^0(\sO_{\proj{3}}(s-4))- h^0(\sO_{E}(s-4)) \, .$$ Indeed
since $E$ is a line we have $ \coker \alpha_E =0$ by \eqref{alphaN}. It
follows that $\dim W= \dim A^1(C \subset S)$ is given by the formula
accompanying \eqref{alphaN} recalling $ A^2 \simeq \coker \alpha_C$, i.e.
$\dim W$ is as stated. Finally having both vanishings, we also get (ii) using
the smoothness of $pr_1$ at $(C,S)$, cf. \eqref{new4.5}.

It remains to prove that $W$ is an irreducible component also when
$H^1(\sO_S(C-4H))\ne 0$, e.g. to show that $g>G(d,s+1)$, and in the
case $g=G(d,s+1)$ to show $\dim V \le \dim W$ for the component $V \subset
H(d,G(d,s+1))_{sc}$ of curves of maximum genus mentioned in subsection 2.3.
Noticing that \begin{equation} \label{maxgen5} g-G(d,s+1) = \frac d 2 \left(
    2a-1 - \frac {d}{s+1} \right)- \frac {a^2s}{2} + \epsilon
\end{equation} by \eqref{maxgen2} and \eqref{sge5} where $ \epsilon= \frac{r(s+1-r)s}{2(s+1)} \ $  and  $ r$ is given by $\ d+r \equiv 0 \ { \rm mod } \ (s+1) \ \ {\rm for} \ \ 0 \le r \le s$, 
we first consider curves on the boundary $(s-1)b=(s-2)a$, i.e. where $C
\equiv n(s-1)\f_1 +n(s-2)\f_2$.
Inserting $a=n(s-1)$, $d=n(s-1)^2$ and $a^2=nd$ into \eqref{maxgen5} and
denoting $ \epsilon(C):= \epsilon$ we get \begin{equation*}
  g-G(d,s+1) = \frac {d} {2(s+1)} \left(n(s-3)- (s+1) \right) +
  \epsilon(C) \ . \end{equation*} It follows easily that $g\ge
G(d,s+1)$ except in the cases $s=5,6$ and $(a,b)=(2s-2,2s-4)$, and moreover
that $g = G(d,s+1)$ only for $(s,n) \in \{(7,2),(5,3) \}$. The case $(s,n)
=(5,3)$ yields $d=48$ and $g=241$, cf. proof of Theorem~\ref{mainQ5}.
We get $\dim V=237 < \dim W$. The case $(s,n) =(7,2)$ is similar. Indeed let
$V$ be the component of $\HH(d,g)_{sc}$ containing a curve $C'$ of
maximum genus $G(d,6)=469$ where $d=72$. By \eqref{maxgen2} $C'$ is a c.i. of type $(8,9)$ with dualizing sheaf $\omega_{C'} \simeq
\sO_{C'}(13)$. We get $\dim V=4d+h^1(\sO_{C'}(8))+h^1(\sO_{C'}(9))=379$, while
$\dim W =-3d+g+134=387$ by Theorem~\ref{mainQs}, i.e. $W$ is an irreducible
component.

Finally, it suffices to show $g(D)>G(d(D),s+1)$ for $D \in |C+kH|$, $k > 0$
and $C$ on the mentioned boundary. Using $d(D)=d+ks$ and \eqref{maxgen5}, or
directly $g(D)=g+kd+sk(s-4+k)/2$, we prove that
\begin{equation} \label{maxgen6} g(D)-G(d(D),s+1)-[ g-G(d,s+1)] = \frac
  {k(2d+sk)}{2(s+1)}- \frac {sk}{2} + \epsilon(D)- \epsilon(C) \
  .\end{equation} To
compute 
$\epsilon(D)- \epsilon(C)$, we remark that $\epsilon(C)$
does not change using the number $\rho$ satisfying $\ d \equiv \rho \ { \rm
  mod } \ (s+1) \ \ {\rm for} \ \ 0 < \rho \le s+1$ instead of $r$ in
$\epsilon(C)$ because $r=s+1-\rho$. Since $d(D)=d+ks \equiv \rho-k \ {
  \rm mod } \ (s+1)$, we have (where the inequality is an equality if $\rho-k
> 0$)
$$\epsilon(D)- \epsilon(C)\ge[(\rho-k)(s+1-\rho+k)s- \rho(s+1-\rho)s]/(2s+2)=-k(s+1-2\rho+k)s/(2s+2).$$
Combining with \eqref{maxgen6} we get
\begin{equation*} \frac {k(2d+sk)}{2(s+1)}- \frac {sk}{2} +
  \epsilon(D)- \epsilon(C)= \frac {k}{s+1}(d -s(s+1)+\rho s) >
  0 \end{equation*} because $d>s^2$ and $\rho \ge 1$. This shows
$g(D)>G(d(D),s+1)$ for $(s,n) \notin \{(5,2),(6,2)\}$. In the cases $(s,n) \in
\{(5,2),(6,2)\}$, a direct computation show $(d -s(s+1)+\rho s)/(s+1) = 2$ and
$ g-G(d,s+1)=-2$ (resp. $-1$) for $s=5$ (resp. $6$). Hence $g(D)>G(d(D),s+1)$
except in the case $(s,n,k)=(5,2,1)$ where we have $g(D)=G(d(D),6)=150$ and
$d(D)=37$, $D=C+H$. Since $\dim W= -d(D)+g(D)+56=169$ by Theorem~\ref{mainQs}
while Theorem~\ref{propcomp} with $s=6$ (replacing $W$ in
Theorem~\ref{propcomp} by $V$) implies $\dim V = -2d(D)+g(D)+82+4+3=165 < \dim
W$, $W$ is also now an irreducible component and we are done.
\end{proof}
\begin{remark} \label{MNa} The components $W$ of Theorem~\ref{mainQs} may be
  non-reduced components of \ ${\rm H}(d,g)_{sc}$ in a range close to the
  boundary of the nef cone even though we only succeed to prove it for $s=5$
  and $(a,b)=(4n,3n)$, $n\ge 3$ (Theorem~\ref{mainQ5}). If we had been able to
  compute $h^0(\sN_C)$ for a general curve $C$ of $W$, we could conclude that
  all components satisfying $\dim W < h^0(\sN_C)$ were non-reduced. Another
  promising approach to deal with this problem is to compute the cup-product
  of an element of $H^0(\sN_C)$ and show that it is non-zero, as Mukai and
  Nasu do in \cite{N, MuNa,Na} by using the role of effective divisors with
  negative self-intersection in linear systems corresponding to $\arrowvert
  C-4H \arrowvert$.
\end{remark}
\section{Appendix on non-reduced components of ${\rm H}(d,g)_{sc}$ for $s=3$}

{  by Jan O. Kleppe}

\bigskip 
In this section we look at progress to the conjecture below. Note that
a maximal family $W$ is closed and irreducible by our definition, and that $\dim
W = d+g+18$ always holds provided $d > 9$.

\begin{conjecture} \label{conj} Let $W$ be a $3$-maximal 
  family of smooth connected, {\it linearly normal} space curves of degree $d
  > 9$ and genus $g$, whose general member $C$ is contained in a smooth cubic
  surface. Then $W$ is a non-reduced irreducible component of $ \HH(d,g)_{sc}$
  if and only if
 $$ d \geq 14, \ \ 3d - 18  \leq  g \leq  (d^2-4)/8 \ \ and  \ \
H^1(\sI_C(3))\neq 0 \, .$$
 \end{conjecture}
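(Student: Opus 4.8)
The plan is to work throughout on a smooth cubic surface $S\subset\proj{3}$, realised as $\proj{2}$ blown up at six general points, so that $\Pic(S)$ is freely generated by the pullback $\ell$ of a line together with the six exceptional classes $E_1,\dots,E_6$, with $H=3\ell-\sum E_i$ and $K_S=-H$. Writing the general member of $W$ as $C\equiv \delta\ell-\sum_{i=1}^{6}m_iE_i$ (normalised by $\delta\ge m_1\ge\cdots\ge m_6\ge0$ and $\delta\ge m_1+m_2+m_3$, which are the conditions making the generic divisor in the class a smooth connected curve), one reads off $d=C\cdot H=3\delta-\sum m_i$ and $2g-2=C^2+C\cdot K_S=C^2-d$ by adjunction on $S$; linear normality is the standing hypothesis $H^1(\sI_C(1))=0$. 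Since $s(W)=3$, \eqref{new4.6} gives $\dim W=d+g+18$, and the statement splits along the usual dichotomy: by \eqref{new4.6} a $3$-maximal family already \emph{known} to be a component is generically smooth exactly when $H^1(\sI_C(3))=0$, so once $W$ is shown to be a component, non-reducedness is equivalent to $H^1(\sI_C(3))\ne0$. As in Remark~\ref{remmain4} one has $H^1(\sI_C(3))\simeq H^1(\sI_{C/S}(3))=H^1(\sO_S(3H-C))$, and it is this identification that converts $H^1\ne0$ into the lattice condition $1\le m_6\le2$. Thus the genuine content is to characterise when $W$ is a component.

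First I would dispose of the ``only if'' direction, which is essentially formal. If $W$ is an irreducible component, Remark~\ref{irredcomp} forces $\dim W=d+g+18\ge4d$, i.e.\ the lower bound $g\ge3d-18$ of \eqref{gencomp}; and if moreover $W$ is non-reduced, then \eqref{new4.6} yields $H^1(\sI_C(3))\ne0$, hence $m_6\ge1$. For the upper end I would note that very high genus at fixed $d$ forces the $7$-tuple towards the balanced complete-intersection class $|kH|$, where $m_6$ is large and $H^1(\sO_S(3H-C))=0$; combined with the Gruson--Peskine threshold $G(d,4)=(d^2-4)/8$ (the value of \eqref{maxgen2} for $d\equiv2\ (4)$, and the relevant cap in general), this pins the genus into $3d-18\le g\le(d^2-4)/8$ and rules out the low degrees $d\le13$ by the numerology of \eqref{gencomp}.

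The heart of the argument is the ``if'' direction, and specifically the claim that $W$ is a component. Assume $d\ge14$, $3d-18\le g\le(d^2-4)/8$ and $1\le m_6\le2$; the last gives $H^1(\sI_C(3))\ne0$, so once component-ness is established, non-reducedness follows from \eqref{new4.6}. To prove $W$ is a component I would argue by contradiction: suppose $W\subsetneq V$ with $V$ irreducible and $\dim V>\dim W=d+g+18$. By $3$-maximality $s':=s(V)\ge4$, and I would apply Proposition~\ref{20} to the general curve $C'$ of $V$ lying on an integral surface $F$ of degree $s'$. For $s'\ge5$ the genus hypotheses push the crude bounds below $d+g+18$ exactly as in the proof of Theorem~\ref{main4}, eliminating those cases. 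The delicate case is $s'=4$, where Proposition~\ref{20} only gives $\dim V\le34+\max\{d^2/4-g,\,d^2/8,\,g\}$; because $g\le(d^2-4)/8$ makes the first term dominant, this does \emph{not} by itself contradict $\dim V>d+g+18$ for $g$ near the bottom of the range. Here I would bring in finer information about maximal families of curves on quartic surfaces---either the Noether--Lefschetz and obstruction estimates underlying Theorem~\ref{main4} and Corollary~\ref{corrmain4}, or a direct liaison argument linking $C$ inside a complete intersection on $S$ to a curve of controlled postulation---to show no quartic-supported $V$ of dimension exceeding $d+g+18$ can swallow $W$. It is precisely to run this estimate uniformly that one imposes the extra lattice hypothesis $m_5\ge6-m_6$, which constrains the admissible $7$-tuples tightly enough to close the gap.

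The main obstacle, as the above makes plain, is this last elimination of $s'=4$ enlargements for small genus: the max-genus inequality \eqref{maxgen2} and the crude dimension bound of Proposition~\ref{20} are simply too weak near $g=3d-18$, so one must either sharpen the dimension count for families of curves on quartics or exploit liaison to reduce to a better-understood curve. This is exactly where the full conjecture resists a clean proof, and the realistic target is to establish the equivalence under the additional hypothesis $m_5\ge6-m_6$, leaving a short explicit list of low-degree exceptional pairs $(d,g)$ (near $d=14$, where the slack in the $s'=4$ estimate degenerates) to be checked by hand or left open.
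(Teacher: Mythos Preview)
This statement is a \emph{conjecture}; the paper does not claim to prove it, and says explicitly that only the ``only if'' direction is easy while the ``if'' direction is established only under the extra hypotheses of Theorem~\ref{mainC}. Your sketch of the ``only if'' direction is essentially what the paper indicates (Remark~\ref{irredcomp} for $g\ge 3d-18$, \eqref{new4.6} for $H^1(\sI_C(3))\ne0$, and fact~(A) preceding the conjecture for the upper bound $(d^2-4)/8$), so that part is fine.

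For the ``if'' direction your strategy has a genuine gap, and not only at $s'=4$. You claim that ``for $s'\ge5$ the genus hypotheses push the crude bounds below $d+g+18$ exactly as in the proof of Theorem~\ref{main4}'', but the only genus assumption available in the conjecture is $g\ge 3d-18$, which is far too small. Concretely, for $s'=5$ Proposition~\ref{20} gives $\dim V\le 55+\max\{d^2/5-g,\,d^2/10,\,-d+g+3\}$, and $d+g+18\ge 55+d^2/5-g$ would require $2g\ge d^2/5-d+37$; with $g=3d-18$ this becomes $d^2-35d+365\le0$, which has no real solutions. So Proposition~\ref{20} eliminates nothing here; compare Proposition~\ref{4.7} and \eqref{range4.7}, where exactly this dimension-count argument is carried out but only under the much stronger hypothesis $g>\max\{d^2/10-d/2+18,\,G(d,6)\}$.

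The paper's proof of the partial result (Theorem~\ref{mainC}) follows a completely different route that you do not identify. The two key ingredients are Ellia's Proposition~\ref{Ellia}, which shows that any generization $C'$ with $H^0(\sI_{C'}(3))=0$ already satisfies $H^0(\sI_{C'}(4))=0$ (so $s(C')\ge5$ comes for free once $d\ge21$ and $g\ge 3d-18$), and Lemma~\ref{lem1}, an explicit lower bound for $h^0(\sI_C(v))-h^1(\sI_C(v))$ in terms of the $m_i$. Semicontinuity of $h^1(\sO_C(v))$ then transfers this to $C'$ and forces $h^0(\sI_{C'}(v))>0$ for some small $v$ (e.g.\ $v=4$ when $m_6=2,\ m_5\ge4$; $v=6$ when $m_6=1,\ m_5\ge6$), contradicting $s(C')\ge5$ directly or after a short liaison step. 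The lattice hypotheses on $m_5$ in Theorem~\ref{mainC} are there to make Lemma~\ref{lem1} yield a positive bound at the right twist~$v$, not to tighten a dimension estimate as you suggest.
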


 This conjecture, originating in \cite{K2}, is here presented by modifications
 proposed by Ellia \cite{E} (see also \cite{DP} by Dolcetti, Pareshi), because
 they found counterexamples which heavily depended on the fact the general
 curves were {\it not} linearly normal (i.e. the curves satisfied
 $H^1(\sI_C(1)) \neq 0$).

 The conjecture is known to be true in many cases. Indeed Mumford's well known
 example (\cite{Mu}) of a non-reduced component is in the range of
 Conjecture~\ref{conj} (minimal with respect to both degree and genus).
 Also the main result by the author in \cite{K1} shows that the conjecture
 holds provided $g > 7 + (d-2)^2/8$, $ d \geq 18$, and Ellia makes further
 progresses in \cite{E} which we comment on later. Recently Nasu proves (and
 reproves) a part of the conjecture by showing that the cup-product
\begin{equation*} \label{cup} \ \ H^0(\mathcal{N}_{C}) \times
  H^0( \mathcal{N}_{C}) \rightarrow H^1( \mathcal{N}_{C})
\end{equation*} 
is nonzero if $h^1(\sI_C(3))=1$ (\cite{N}). In this section we will see that
the methods of \cite{K1} and a nice result of Ellia in \cite{E} imply that we
can greatly enlarge the range where Conjecture~\ref{conj} holds.
 
Now recall that a smooth cubic surface $S$ is obtained by blowing up
$\proj{2}$ in six general points (see \cite{H1} and \cite{GP2}). Taking the
linear equivalence classes of the inverse image of a line in $\proj{2}$ and
$-E_i$ (minus the exceptional divisors), $i = 1,..,6$, as a basis for
$\Pic(S)$, we can associate a curve $C$ on $S$ and its corresponding
invertible sheaf $\sO_S(C)$ with a $7$-tuple of  integers
$(\delta,m_1,..,m_6)$ satisfying
\begin{equation} \label{numpic}
  \delta \geq m_1 \geq.. \geq m_6 \ \ and \ \ \delta \geq  m_1 + m_2 +m_3 \, .
\end{equation}
The degree and the (arithmetic) genus of the curve are given by

\begin{equation*}
d  = 3\, \delta - \sum_{i=1 }^{6} \, m _{i}   ~ ,  ~   ~  g   =  {
  \delta -1  \choose 2}   -   \sum_{i=1 }^{6} \,{ m _{i} \choose 2}  \, .
\end{equation*}
In terms of a $7$-tuple $(\delta ,m_1,..,m_6)$ satisfying \eqref{numpic} one
may use Kodaira vanishing theorem and a further analysis (see \cite[Lem.\;16
and Cor.\;17]{K1}) to verify the following facts for a curve
$C$;\\

(A) If $m_6 \geq 3$ and $(\delta,m_1,..,m_6) \neq
(\lambda+9,\lambda+3,3,..3)$ for any $\lambda$ $\geq$ 2, then $
\HH^1(\sI_C(3)) = 0$. In particular if a curve  on a smooth
cubic satisfies $g > (d^2-4)/8 $, then
\begin{equation*}
 H^1(\sI_C(3)) = 0 \, .
\end{equation*} 

(B) If $m_6 \geq 1$ and $(\delta, m_1,..,m_6) \neq
(\lambda+3,\lambda+1,1,..1)$ for any $\lambda \geq 2$, then $H^1(\sI_C(1)) =
0$. Moreover, in the range $ d \geq 14$ and $g \geq 3d-18$, we have

  $$ H^1(\sI_C(3)) \neq 0 \ {\rm and} \ H^1(\sI_{C}(1))= 0 \ \ \ {\rm if \
    and \ only \ if } \ \ \ \ 1 \leq m_6 \leq 2 \, .$$

  \begin{remark} i) The explicit size of the interval where
    $H_{*}^1(\sI_{C}):=\oplus_v H^1(\sI_{C}(v))$ is non-vanishing (and a proof
    of it) was originally found by Peskine and Gruson (see
    \cite[Prop.\;3.1.3]{K2}). 

    ii) The case \ $m_6 = 0$ is treated by Dolcetti and Pareshi in \cite{DP}.
    In this case they found a range in the $(d,g)$-plane where the maximal
    subsets $W$ were {\sf contained} in a non-reduced component of dimension >
    $d+g+18$, see also \cite[Rem. VI.6]{E}.
\end{remark}

Using (A) and the fact that $H^1(\sI_C(3)) = 0$ implies unobstructedness and
$\dim W = d+g+18$ (for $d > 9$), one may easily see that the conditions of
Conjecture~\ref{conj} are necessary for $W$ to be a non-reduced component. The
conjecture therefore really deals with the converse, and we may suppose $m_6 =
1$ or $ 2$ by (B). For both values the main theorem of this section shows that
the conjecture is true under weak assumptions, thus generalizing the main
results of \cite{E} and \cite{K1} to:

\begin {theorem} \label{mainC} Let $W$ be a 3-maximal
  family of smooth connected space curves, whose general member sits on a
  smooth cubic surface $S$ and corresponds to the $7$-tuple
  $(\delta,m_1,..,m_6)$, $\delta \geq m_1
  \geq .. \geq m_6$ and $\delta \geq m_1 + m_2 +m_3$, of $\Pic(S)$. 
Then  \\[-2mm]
  
i) $W$ is a generically smooth, irreducible
component of $\HH(d,g)_{sc}$ provided  \\[-2mm]

\hspace{2.5cm} $m_6 \geq 3$ and $(\delta, m_1,..,m_6) \neq
(\lambda+9,\lambda+3,3,..3)$ for any $\lambda \geq 2$, \\[-2mm]

ii) $W$ is a non-reduced irreducible component of $\HH(d,g)_{sc}$
provided; \\[-2mm]

\hspace{0.2cm} a) \ $m_6=2 ,\ m_5 \geq 4 , \  d \geq 21$ \ and \ $(\delta,
 m_1,..,m_6) \neq (\lambda+12,\lambda+4,4,..,4,2)$ for any $\lambda \geq 2$, or
\\[-2mm]

\hspace{0.2cm} b)  \ $m_6 = 1, \ m_5 \geq 6, \ d \geq 35$ \
and \ $(\delta, m_1,..,m_6) \neq (\lambda+18,\lambda+6,6,..,6,1)$ for any
$\lambda \geq 2$,
or  \\[-2mm]

\hspace{0.2cm} c) \ $m_6 = 1, m_5 = 5, m_ 4 \geq 7, \ d
\geq 35$ \ and \ $(\delta,
 m_1,..,m_6) \neq (\lambda+21,\lambda+7,7,..,7,5,1)$ for $\lambda \geq 2$.
\end{theorem}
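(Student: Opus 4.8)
The plan is to treat i) and ii) separately, each time reducing to the deformation-theoretic machinery of Section~2 together with the cohomological facts (A) and (B) recalled above. I will use the standing assumption $d>9$ of this section throughout, so that for the $3$-maximal family $W$ one has $\dim W=d+g+18$: by the discussion in Section~2 the map $\alpha_C$ is surjective (here $s=3\le4$, $d>9$ and $C$ is smooth connected), hence $\coker\alpha_C=0$ and \eqref{new4.6} gives $\dim W=(4-3)d+g+\binom 63-2$. For part i), fact (A) applied to the $7$-tuples allowed there yields $H^1(\sI_C(3))=0$ for a general member $C$ of $W$, so Proposition~\ref{propcomp} with $s=3$ applies verbatim and shows that $W$ is a generically smooth irreducible component of $\HH(d,g)_{sc}$ of dimension $d+g+18$; the type $C\equiv 3H$, the only one with $d=9$, is ruled out by the standing hypothesis $d>9$.

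For part ii) I would first combine fact (B) with the hypotheses $1\le m_6\le2$ and the explicitly excluded $7$-tuples — which are exactly those on which the conclusion of (B) fails — to get, after checking that the numerical data in a), b) and c) place $(d,g)$ in the range $d\ge14$, $g\ge 3d-18$ (and $g\le(d^2-4)/8$, automatic since $m_6\le2$), that $H^1(\sI_C(3))\ne0$ and $H^1(\sI_C(1))=0$ for a general $C\in W$. By Remark~\ref{irredcomp} together with \eqref{new4.6} this reduces the problem to showing that $W$ is an irreducible component of $\HH(d,g)_{sc}$: the non-reducedness is then automatic, since $\dim W=d+g+18<h^0(\sN_C)$ because $H^1(\sI_C(3))\ne0$.

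To prove $W$ is a component I would suppose $W\subsetneq V$ with $V$ a component and $\dim V>\dim W=d+g+18$; by the $3$-maximality of $W$ the general curve $C'$ of $V$ satisfies $s:=s(C')\ge4$ and lies on an integral surface $F$ of degree $s$. The cases $s\ge5$ are disposed of by combining Proposition~\ref{20} with the maximum-genus bound \eqref{maxgen2}, exactly as in the proof of Theorem~\ref{main4}, the lower bounds on $d$ in a), b), c) being calibrated so that these estimates apply; the delicate case is $s=4$, where Proposition~\ref{20} only gives $\dim V\le \binom 73-1+\max\{d^2/4-g,\,d^2/8,\,g\}$, and this is $\le d+g+18$ only for $g$ near the top of its range — this is the $s(C)=3$ analogue of Theorem~\ref{main4}, valid for $g>7+(d-2)^2/8$ and deducible from \cite{K1}. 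For smaller $g$, down to $3d-18$, one needs more, and this is precisely where the result of Ellia in \cite{E} enters: his finer analysis of deformations of curves $C'$ on a quartic surface with $H^1(\sI_{C'}(3))\ne0$, used together with the conditions $m_5\ge 6-m_6$ (and $m_4\ge7$ in case c)), forces $\dim V\le d+g+18$, the contradiction sought. I expect this $s=4$, small-genus case to be the main obstacle; the residual work is then numerical — verifying that Ellia's estimate applies to every $7$-tuple satisfying a), b) or c), and checking by inspection that the handful of families it misses are exactly the excised ones $(\lambda+12,\lambda+4,4,\dots,4,2)$, $(\lambda+18,\lambda+6,6,\dots,6,1)$, $(\lambda+21,\lambda+7,7,\dots,7,5,1)$, which are for $m_6\le2$ the analogues of the exceptional type $(\lambda+9,\lambda+3,3,\dots,3)$ of fact (A). Finally \eqref{new4.6}, i.e.\ $h^0(\sN_C)=\dim W+h^1(\sI_C(3))$, renders the non-reducedness quantitative and finishes the argument.
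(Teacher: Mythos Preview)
Your high-level architecture is right --- part i) follows from Proposition~\ref{propcomp} (equivalently \cite[Thm.\,1]{K1}), and for ii) it suffices to show that $W$ is a component, since then \eqref{new4.6} and $H^1(\sI_C(3))\neq 0$ give non-reducedness. But the actual mechanism you propose for ruling out the existence of $V\supsetneq W$ has two genuine gaps.

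First, you misread Ellia's input. Proposition~\ref{Ellia} does not give a dimension estimate for families on quartics; it says that under the hypotheses (linear normality, $d\ge 21$, $g\ge 3d-18$) any generization $C'$ with $H^0(\sI_{C'}(3))=0$ also satisfies $H^0(\sI_{C'}(4))=0$. In other words, Ellia's result disposes of $s(C')=4$ in one stroke and forces $s(C')\ge 5$. So the ``delicate case $s=4$'' you worry about simply does not occur, and your plan of bounding $\dim V$ via Proposition~\ref{20} is not how the argument proceeds (nor would it succeed: the genus need not be large enough for those estimates to bite, which is precisely why Theorem~\ref{mainC} goes beyond \cite{K1}).

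Second, you are missing the key technical tool, Lemma~\ref{lem1}. The excluded $7$-tuples in a), b), c) are \emph{not} the exceptions of (B) --- for instance $(\lambda+12,\lambda+4,4,\dots,4,2)$ has $m_6=2$ and is plainly linearly normal. They are exactly the exceptions of Lemma~\ref{lem1} for $v=4$, $6$, $7$ respectively. That lemma, combined with semicontinuity of $h^1(\sO_C(v))$, gives lower bounds $h^0(\sI_{C'}(v))-h^1(\sI_{C'}(v))\ge h^0(\sI_C(v))-h^1(\sI_C(v))\ge \binom v3-\sum\binom{v+1-m_i}2$. In case a) this yields $h^0(\sI_{C'}(4))\ge 1$, contradicting Ellia directly. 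In b) and c) one gets $h^0(\sI_{C'}(6))\ge 5$ (resp.\ $\ge 4$ together with $h^0(\sI_{C'}(7))\ge 11$); since $s(C')\ge 5$ by Ellia, $C'$ lies on a complete intersection of type $(5,6)$, $(6,6)$ or $(5,7)$, forcing $d\le 36$ and a quick linkage/dimension check finishes off the residual cases. This complete-intersection endgame, not Proposition~\ref{20}, is where the lower bounds $d\ge 21$ and $d\ge 35$ are calibrated.
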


In the exceptional case $(\lambda+9,\lambda+3,3,..,3)$ of $i)$ 
we have $H^1(\sO_C(3)) = 0$; whence $W$ is contained in a unique generically
smooth irreducible component $V$ of $ \HH(d,g)_{sc}$ and $\dim V - \dim W =
h^1(\sI_C(3))$ (cf. \cite[Thm.\;1]{K1}). For $m_6=2$ in $ii)$ Nasu's result in
\cite{N} gives a better range, see Remark~\ref{oldPreprint} ii).

To prove Theorem ~\ref{mainC}, we will need the following two results:

\begin{proposition} \label{Ellia} (Ellia) Let $d$ and $g$ be integers such
  that $ d \geq 21$ and $g \geq 3d - 18$, let $W$ be as in Theorem
  ~\ref{mainC} and suppose the general curve $C$ of $W$ satisfies $
 H^1(\sI_C(1)) = 0$. If $C'$ is a generization of $C$ in $\HH(d,g)_{sc}$
  satisfying $H^0(\sI_{C'}(3)) = 0$, then $H^0(\sI_{C'}(4)) = 0$.
\end{proposition}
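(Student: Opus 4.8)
The plan is to argue by contradiction. Suppose $C'$ is a generization of $C$ in $\HH(d,g)_{sc}$ with $H^0(\sI_{C'}(3))=0$ but $H^0(\sI_{C'}(4))\neq 0$; then $s(C')=4$, so $C'$ lies on an integral (indeed, for general $C'$ in its 4-maximal family, a smooth) quartic surface $F'$ and $(C')$ belongs to an irreducible component $V$ of $\HH(d,g)_{sc}$ with $s(V)=4$ and $W\subseteq V$, $\dim W<\dim V$. First I would pin down $\dim W=d+g+18$ using \eqref{new4.6} with $s=3$ (valid since $d>9$), and recall from Remark~\ref{remmain4}-type reasoning that the hypothesis $H^1(\sI_C(1))=0$ and $g\ge 3d-18$ place us exactly in the regime of part (B): the general curve of $W$ has $1\le m_6\le 2$, so $H^1(\sI_C(3))\neq 0$, and the curve is not in the excluded family $(\lambda+3,\lambda+1,1,\dots,1)$. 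The point of invoking $H^1(\sI_C(1))=0$ is to guarantee, via semicontinuity, that a general curve $C'$ of $V$ also satisfies $H^1(\sI_{C'}(1))=0$, i.e. $C'$ is linearly normal and hence $h^1(\sI_{C'}(1))\le \max\{d-g,d/2\}-3$ by Clifford and Riemann–Roch, which in the range $d\ge 21$, $g\ge 3d-18$ forces $h^1(\sI_{C'}(1))=0$ outright when $g$ is not too small, and in any case keeps it controlled.

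Next I would bound $\dim V$ from above using Proposition~\ref{20} with $s=4$ (applicable since $d>16$):
\[
\dim V\ \le\ \binom{7}{3}-1+\max\Big\{\tfrac{d^2}{4}-g,\ \tfrac{d^2}{8},\ g-1+h^0(\sO_{C'}(0))\Big\}
\ =\ 34+\max\Big\{\tfrac{d^2}{4}-g,\ \tfrac{d^2}{8},\ g\Big\}.
\]
Comparing with $\dim W=d+g+18$, a strict containment $\dim W<\dim V$ would require $34+\max\{d^2/4-g,\ d^2/8,\ g\}>d+g+18$, i.e. one of: $d^2/4-g>d+g-16$ (so $g<d^2/8-d/2+8$), or $d^2/8>d+g-16$ (so $g<d^2/8-d+16$), or $16>d$. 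The third is excluded by $d\ge 21$. The first two give upper bounds on $g$ of order $d^2/8$; combined with the standing lower bound $g\ge 3d-18$ this leaves only a band $3d-18\le g\lesssim d^2/8$, which is precisely the band where Ellia's refined argument lives. So the crude dimension count alone does not kill the possibility — this is where the genuine content must enter.

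The main obstacle, therefore, is to rule out a 4-maximal family $V$ in the band $3d-18\le g\le (d^2-4)/8$ whose general curve $C'$ has $H^1(\sI_{C'}(1))=0$: I would do this by a Hilbert-function / linkage argument on the smooth quartic $F'$ carrying $C'$, showing that such a $C'$ cannot specialize to a curve on a cubic surface with $1\le m_6\le 2$ without violating the maximality of $W$, or by showing directly that any such $V$ has dimension $\le d+g+18$ after all (using the finer estimate $\dim pr_2(W)\le 33$ when $pr_2$ is non-dominating together with the c.i.-linkage description of extremal curves on quartics). Concretely, I expect the argument to go: a general $C'$ of $V$ with $s(C')=4$, $H^1(\sI_{C'}(1))=0$, $g\ge 3d-18$, $d\ge 21$ is forced by \eqref{maxgendim}/Theorem~\ref{thmcomp} applied to a suitable $E$ on $F'$ to have $\dim V=d+g+18$, contradicting $\dim W<\dim V$; alternatively, one shows $H^0(\sI_{C'}(4))=0$ is forced because otherwise $C'$ would admit a linkage to a curve of low degree inside a c.i. of type $(4,f)$ whose genus exceeds $g$, contradicting that $C'$ deforms (within $\HH(d,g)_{sc}$) to the curve $C$ on the cubic. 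The bookkeeping identifying exactly which $(d,g)$ survive, and checking the boundary cases $g=3d-18$ and $d=21,\dots$ by hand, is the part requiring care; the structural input is Proposition~\ref{20}, \eqref{new4.6}, and the classification facts (A), (B).
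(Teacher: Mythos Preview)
The paper does not prove this proposition at all: its entire proof reads ``See \cite[Prop.\ VI.2]{E}.'' The result is imported from Ellia's paper as a black box and then used in the proof of Theorem~\ref{mainC}. So there is no in-paper argument to compare your attempt against.

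As for your proposal itself, it does not constitute a proof, and you essentially acknowledge this. A few concrete issues. First, you assert that the component $V$ containing $(C')$ has $s(V)=4$, but this is not justified: the general curve of $V$ could well have $s>4$, with $C'$ lying in a proper $4$-maximal subfamily $Z'\subsetneq V$; your dimension bound from Proposition~\ref{20} with $s=4$ then applies to $Z'$, not to $V$. Second, your proposed remedy via Theorem~\ref{thmcomp} contains an arithmetic slip: for $s=4$ the formula \eqref{maxgendim} gives $g+33$ (plus correction terms), not $d+g+18$. Ironically, correcting this helps you in one subcase: if the general curve of $Z'$ sits on a \emph{smooth} quartic and is not a c.i., then \eqref{new4.6} yields $\dim Z'=g+33$, whence $d+g+18<g+33$ forces $d<15$, contradicting $d\ge 21$. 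The c.i.\ subcase can also be dispatched by a direct dimension count. But the subcase where the general curve of $Z'$ lies only on a \emph{singular} integral quartic is not closed by Proposition~\ref{20}: the bound $\dim Z'\le 33+\max\{d^2/4-g+1,\,d^2/8+1\}$ is compatible with $\dim Z'>d+g+18$ throughout the band $3d-18\le g<d^2/8$, and nothing in your sketch addresses this. Your linkage idea is too vague to evaluate.

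Ellia's actual argument in \cite{E} is of a different character---it proceeds via postulation bounds for curves not lying on low-degree surfaces rather than the flag-scheme dimension machinery of this paper---and the hypothesis $g\ge 3d-18$ enters there in an essential way. You should consult \cite{E} directly; the proposition is genuinely an external input, not a corollary of the tools developed here.
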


\begin{proof}  See \cite[Prop. VI.2]{E}. 
\end{proof}

We remark that Ellia uses this key proposition to prove the conjecture
provided $d \geq 21$ and $g > G(d,5)$, cf. \eqref{maxgen}. His result is in
most cases clearly better than the result in \cite{K1} which requires $g > 7 +
(d-2)^2/8$, $ d \geq 18$, because $G(d,5) = d^2/10 + d/2 + \epsilon$,
$\epsilon$ a correction term. 
There are, however, many cases where Theorem~\ref{mainC} implies the
conjecture while Ellia's result does not.

\begin{lemma} \label{lem1} Let $C$ be a curve sitting on a smooth cubic
  surface $S$, whose corresponding invertible sheaf is given by
  $(\delta,m_1,..,m_6)$, $\delta \geq m_1 \geq .. \geq m_6$ and $\delta \geq
  m_1 + m_2 +m_3$. If $v$ is a non-negative integer such that $m_3 \ge v$, and
  $(\delta, m_1,..,m_6) \neq (\lambda+3v,\lambda+v,v,v,m_4,m_5,m_6)$ for any
  $\lambda \geq 2$, then
\begin{displaymath} 
  h ^{0} (\sI_{C} \, (v)) - h ^{1} (\sI_{C} \, (v))   \ge  {v \choose 3}   -  
  \sum_{m_i < v} \,{v+1 - m_i \choose 2} 
\end{displaymath}
where the sum is taken among those $i \in \{4,5,6\}$ satisfying $m_i < v$.
\end{lemma}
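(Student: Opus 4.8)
The plan is to compute $\chi(\sI_C(v)) = h^0(\sI_C(v)) - h^1(\sI_C(v)) + h^2(\sI_C(v)) - h^3(\sI_C(v))$ directly and then to show that the higher cohomology $h^2(\sI_C(v))$ and $h^3(\sI_C(v))$ contribute nothing (or nothing positive) under the stated hypotheses, so that the Euler characteristic becomes a lower bound for $h^0(\sI_C(v)) - h^1(\sI_C(v))$. First I would use the exact sequence $0 \to \sI_C(v) \to \sO_{\proj{3}}(v) \to \sO_C(v) \to 0$ together with $H^i(\sO_{\proj{3}}(v))=0$ for $i=1,2$ and $H^3(\sO_{\proj{3}}(v))=0$ for $v \ge -3$, to identify $h^1(\sI_C(v))$ and $h^2(\sI_C(v))$ with the cokernel and (dual of) a piece of $H^1(\sO_C(v))$, and to get $\chi(\sI_C(v)) = \binom{v+3}{3} - \chi(\sO_C(v))$ whenever $v \ge 0$ so that $h^3$ drops out. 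Here $\chi(\sO_C(v)) = dv + 1 - g$ by Riemann--Roch on $C$.

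Next I would substitute the formulas $d = 3\delta - \sum m_i$ and $g = \binom{\delta-1}{2} - \sum \binom{m_i}{2}$ and expand. The point is that the resulting polynomial in $\delta$ and the $m_i$ should reorganize as $\binom{v}{3}$ minus a sum of correction terms, one term $\binom{v+1-m_i}{2}$ for each exceptional coefficient $m_i$ with $m_i < v$; more precisely I expect the clean identity
\[
\binom{v+3}{3} - (dv+1-g) \ = \ \binom{v}{3} + \binom{\delta - v + 1}{2} - \sum_{i=1}^{6} \binom{v+1-m_i}{2},
\]
where $\binom{n}{2}$ is read as the usual polynomial $n(n-1)/2$ (which can be negative) when $n < 0$ — or, after sorting the nonnegative from the negative contributions, exactly $\binom{v}{3} + \binom{\delta-v+1}{2} - \sum_{m_i \ge v}\binom{v+1-m_i}{2} - \sum_{m_i < v}\binom{v+1-m_i}{2}$. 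Using $m_3 \ge v$ one checks the terms with $m_i \ge v$ for $i=1,2,3$ are automatically $\le 0$ (since $v+1-m_i \le 1$), and the middle binomial $\binom{\delta-v+1}{2}$ is the delicate one: it is nonnegative as long as $\delta - v + 1 \ge 0$ or $\delta - v + 1 \le 1$, i.e. for all $\delta$ except possibly $\delta = v - 1$, which forces a very specific shape of the $7$-tuple.

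The main obstacle — and the reason for the exceptional family $(\lambda+3v,\lambda+v,v,v,m_4,m_5,m_6)$ excluded in the statement — is controlling the higher cohomology $h^2(\sI_C(v))$, equivalently $h^1(\sO_C(v))$, so that it does not spuriously inflate the difference $h^0 - h^1$. By Serre duality on $C$, $h^1(\sO_C(v)) = h^0(\omega_C(-v))$, and $\omega_C = \sO_C(K_S + C)$ with $K_S = -H$ on a cubic surface, so this is $h^0(\sO_C((\delta-1)\ell - \sum(m_i+1)E_i - v H))$ in the Picard-group notation; I would run the same Kodaira-vanishing-plus-case-analysis used for facts (A) and (B) above (i.e. \cite[Lem.\;16 and Cor.\;17]{K1}) to see that the relevant linear system on $S$ restricts to zero on $C$ precisely when we are not in the listed exceptional $7$-tuple. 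Granting that vanishing, $\chi(\sI_C(v)) \le h^0(\sI_C(v)) - h^1(\sI_C(v))$, and combining with the binomial identity above and discarding the nonnegative terms $\binom{\delta-v+1}{2}$ and $-\binom{v+1-m_i}{2}$ for $i \le 3$ (which are $\ge 0$ since each $v+1-m_i \le 1$) yields exactly $h^0(\sI_C(v)) - h^1(\sI_C(v)) \ge \binom{v}{3} - \sum_{m_i < v}\binom{v+1-m_i}{2}$, as claimed. I would end by remarking that the excluded tuple is exactly the boundary case $\delta = v-1$ (after relabeling $\delta = \lambda + 3v$, $m_1 = \lambda + v$) where the sign of the middle term flips, which is why it must be thrown out.
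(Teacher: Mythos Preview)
Your approach has a genuine and fatal gap: the vanishing $h^1(\sO_C(v))=0$ that you need simply does not hold under the hypotheses of the lemma. Since $h^3(\sI_C(v))=0$ for $v\ge 0$, the Euler characteristic satisfies $\chi(\sI_C(v)) = (h^0-h^1)(\sI_C(v)) + h^2(\sI_C(v))$, so $\chi$ is an \emph{upper} bound for $h^0-h^1$, and equals it only when $h^2(\sI_C(v))=h^1(\sO_C(v))$ vanishes. But $h^1(\sO_C(v))=h^0(\omega_C(-v))=h^0(\sO_S(C-(v+1)H))$, and this is typically enormous: e.g.\ for $(\delta,m_1,\dots,m_6)=(100,10,\dots,10)$ and $v=5$ (which satisfies $m_3\ge v$ and avoids the excluded tuple), the divisor $C-6H$ has class $(82,4,\dots,4)$ and $h^0(\sO_S(C-6H))$ is in the thousands. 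So your proposed vanishing argument cannot be rescued, and your final inequality $\chi \le h^0-h^1$ goes the wrong way. There are also secondary errors: your binomial identity should have $\binom{3v-\delta+2}{2}$ rather than $\binom{\delta-v+1}{2}$; your sign analysis of $\binom{v+1-m_i}{2}$ is off (this is $\ge 0$ for \emph{every} integer argument, so dropping those terms moves the bound in the wrong direction); and your reading of the excluded tuple as ``$\delta=v-1$'' is unrelated to the actual condition $\delta=\lambda+3v$.

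The paper avoids $h^2$ entirely by computing $h^1(\sI_C(v))$ and $h^0(\sI_C(v))$ \emph{separately} and directly on the surface. One writes a general $D\in|C-vH|$ as $\overline D + F$, where $F=\sum_{m_i<v}(v-m_i)E_i$ is the fixed part and $\overline D$ has class $(\delta-3v,\,\max\{0,m_i-v\})$. The exclusion of $(\lambda+3v,\lambda+v,v,v,m_4,m_5,m_6)$ is exactly what guarantees this moving part is not of type $(\lambda,\lambda,0,\dots,0)$ with $\lambda\ge 2$, so that $|\overline D|$ contains a smooth connected curve (or $\overline D=0$). Then from $0\to\sO_S(vH-C)\to\sO_S\to\sO_D\to 0$ one reads off $h^1(\sI_C(v))=h^0(\sO_D)-1=h^0(\sO_F)$ (or $h^0(\sO_F)-1$ when $\overline D=0$), and since the $E_i$ are disjoint this equals $\sum_{m_i<v}\binom{v-m_i+1}{2}$. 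Combined with the trivial $h^0(\sI_C(v))\ge\binom{v}{3}$ coming from $S$ itself, this gives the stated inequality without ever touching $h^1(\sO_C(v))$.
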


\begin{proof} 
  Let $b_i:= \max\{0,m_i-v\}$ and notice that the invertible sheaf $\sL$,
  given by $(\delta-3v,b_1,..,b_6)$, is generated by global sections because
  $b_6 \ge 0$ and $\delta -3v \geq b_1+b_2+b_3$ (cf. \cite[Sect.\;2]{GP2}).
  Moreover $(\delta-3v,b_1,..,b_6) \ne (\lambda,\lambda,0,..,0)$ for $\lambda
  \ge 2$ by assumption; whence $H^0(\sL)$ contains a smooth connected curve
  $\overline D$ (take $\overline D=0$ in the special case
  $(\delta-3v,b_1,..,b_6)= (\lambda,\lambda,0,..,0)$ with $\lambda=0$).

  Let $n_i:= - \min\{0,m_i-v\}$ for $i \in \{4,5,6\}$, let $F:= \sum n_iE_i$
  and observe that $D := \overline D+F$ is an effective divisor (or zero) of
  the linear system $\arrowvert C-vH \arrowvert$ corresponding to
  $(\delta-3v,m_1-v,..,m_6-v)$. By e.g. the algorithm of \cite[Rem.\;2.7]{Gi},
  for finding the Zariski decomposition, it is clear that $F$ is the fixed
  component of $\arrowvert D \arrowvert$. Now, as in Lem.\! 2.5 and Cor. 2.6
  of \cite{N}, taking global sections of the sequence $0 \to \sI_{C/S}(v)
  \simeq \sO_S(-C+vH) \to \sO_S \to \sO_D \to 0$, we get
  \[ h^1(\sI_C(v)) = h^1(\sI_{C/S}(v)) = h^0( \sO_D) -1 = h^0( \sO_{
    \overline D}) + h^0( \sO_F)-1 = h^0( \sO_F)
\]
for $ \overline D \ne 0$ because $ \overline D \cdot F=0$ ($ h^1(\sI_C(v)) =
h^0( \sO_F)-1$ for $ \overline D =0$). The lines ${E_i}$ are skew and we get
$h^0( \sO_F)= \sum h^0( \sO_{n_iE_i}) = \sum \binom {n_i+1} 2$. Finally
$h^0(\sI_C(v)) = h^0(\sI_{C/S}(v)) + \binom v 3 \ge \binom v 3$ (equality
holds, but we don't need it) and we are done.
\end{proof}

\begin{proof} [Proof of Theorem~\ref{mainC}] \ i) is a special case of
  \cite[Thm. 1]{K1} since $m_3 \ge 3$ easily implies $d > 9$ or that the
  general curve $C$ is a c.i. of type (or bidegree) $(3,3)$.

  ii) By \eqref{new4.6} we get
\begin{equation} 
  \dim W + h^1(\sI_C(3)) = h^0(\sN_C) \ . 
\end{equation}
Since $h^1(\sI_C(3)) \neq 0$, it suffices to prove that $W$ is an irreducible
{\it component\/} of $ \HH(d,g)_{sc}$ because if it is, then $ \dim W <
h^0(\sN_C)$ implies that the general curve $C$ of $W$ is obstructed, i.e. $W$
is non-reduced.

a) To get a contradiction, suppose $W$ is {\it not} a component.
Since $W$ is a maximal family of curves on a cubic surface, there exists a
generization $C'$ of $C$ satisfying $h^0(\sI_{C'}(3)) = 0$. By semi-continuity,
$h^1(\sO_{C'}(4)) \leq h^1(\sO_C(4))$. Combining with $\chi(\sI_{C'}(4)) =
\chi(\sI_C(4))$, it follows that $h^0(\sI_{C'}(4)) - h^1(\sI_{C'}(4)) \geq
h^0(\sI_C(4)) - h^1(\sI_C(4))$. However, by Lemma~\ref{lem1}, we have
$h^0(\sI_C(4)) - h^1(\sI_{C}(4)) \ge 1$, hence $h^0(\sI_{C'}(4)) \geq 1$. Since
the curve is linearly normal by (B), this inequality contradicts the
conclusion of Proposition~\ref{Ellia}.

b) Again it suffices to prove that $W$ is an irreducible
  component of $ \HH(d,g)_{sc}$. To get a contradiction we suppose there is a
generization $C'$ of $C$ satisfying $h^0(\sI_{C'}(3)) = 0$. By
semi-continuity of $h^ 1(\sO_C(v))$ and Lemma~\ref{lem1}, we get  \\[-1mm]

\hspace{1.5cm} $h^0(\sI_{C'}(v)) - h^1(\sI_{C'}(v)) \geq h^0(\sI_C(v)) -
h^1(\sI_C(v)) \ge (_
{3}^ v) - (_{2}^ v)$ for $1 \leq {v} \leq 6$\, . \\[3mm]
Hence $h^ 0(\sI_{C'}(6)) - h^ 1(\sI_{C'}(6)) \geq 5$. Since $s(C') \geq 5$ by
Proposition~\ref{Ellia} and (B), $C'$ is contained in a c.i. of bidegree (5,6)
or (6,6). Hence $d \leq 36$ and we have a contradiction except when $d = 35$
or 36. In the case $d = 36$, $C'$ is a c.i. satisfying $h^0(\sI_{C'}(6)) \geq
5$, and if $d = 35$, we can link $C'$ to a line $D$ satisfying $h^1(\sO_D(2))
\neq 0$ (because $h^0(\sI_{C'}(6)) > 2$), i.e. we get a contradiction in both
cases, and we are done.

c) The proof is similar to b), remarking only that we now have
$h^0(\sI_{C'}(6))\geq 4$ and $h^0(\sI_{C'}(7)) \geq 11$ by Lemma~\ref{lem1},
i.e. $C'$ is contained in a c.i. of bidegree (5,7) or (6,6), and since the
case where $C'$ is a c.i. of bidegree (5,7) can not occur (the dimension of an
irreducible component of $ \HH(d,g)_{sc}$ whose general curve is a c.i. of type
$(5,7)$ is much smaller than $d+g+18$) we conclude as in b).
\end{proof}

\begin{remark} \label{oldPreprint} i) Theorem~\ref{mainC} (without c) of ii))
  was lectured at a workshop organized by the "Space Curves group" of
  Europroj, at the Emile Borel Center, Paris in May 1995, and may be known to
  some experts in the field (cf. \cite[p.\;95]{H2}), but it has not been
  published. The appendix in the preprint \cite{K96} covers the important
  results of the talk, and much of the material is included here. Note that we
  in Lemma\;19 of \cite{K96} should replace equality by inequality, exactly as
  we now do in the displayed formula of Lemma~\ref{lem1} (we see from its
  proof that equality almost always holds, except when $\overline D = 0$).
  This correction does no harm to the arguments of Theorem~\ref{mainC} since it
  is precisely the inequality we need in its proof. In the proof of
  Lemma~\ref{lem1} we follow closely corresponding results in \cite{N} which
  is based on making the fixed component of $\arrowvert C-vH \arrowvert$
  explicit. Lemma~\ref{lem1} for $v=4$ imply Lemma \;18 of \cite{K1}.

  ii) The case a) of Theorem~\ref{mainC} ii) is fully generalized in \cite{N}.
  Indeed Nasu shows that the cup-product (primary obstruction) of the general
  curve of any maximal family $W$ satisfying $m_6=2$ and $m_5 \ge 3$ is
  non-vanishing. We think his approach may be adequate for proving the whole
  conjecture.
\end{remark}

Finally using Proposition~\ref{20} for $s=5$ and closely following the proof
of Theorem~\ref{main4} (replacing $\dim W =g+33$ by $\dim W = d+g+18$ in the
argument and noticing that a generization $C'$ satisfies $s(C')\ge 5$ by Ellia's
Proposition~\ref{Ellia}), we immediately get the following result.
\begin{proposition} \label{4.7} Let $W$ be a 3-maximal family of smooth
  connected space curves, whose general member is linearly normal and sits on
  a smooth cubic surface. If
\begin{equation}  \label{range4.7}
g   >   \max \,\{ \,{\frac{d ^{2} }{10}} - {\frac{d}{2}} +  18  ,\,  
G(d,6)\,\} \, ,  ~ d  \ge   31 \, ,
\end{equation}
then $W$ is an irreducible component of \ $\HH(d,g)_{sc}$. Moreover, $W$ is
non-reduced if and only if $H^ 1(\sI_C(3))\neq 0$. 
In particular Conjecture~\ref{conj} holds in the range \eqref{range4.7}.
\end{proposition}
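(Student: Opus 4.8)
The plan is to run the proof of Theorem~\ref{main4} essentially verbatim, making only the two bookkeeping adjustments forced by the cubic-surface setting that are already flagged in the text: first, a $3$-maximal family has $\dim W = d+g+18$, which is \eqref{new4.6} for $s=3$ (valid since $d\ge 31>9$), rather than $g+33$; and second, a generization of the general curve of $W$ must escape not only cubic surfaces but also quartics. So I would argue by contradiction: suppose $W\subsetneq V$ for an irreducible component $V$ of $\HH(d,g)_{sc}$ with $\dim W<\dim V$, and let $C'$ be the general curve of $V$. Since $W$ is $3$-maximal, $h^0(\sI_{C'}(3))=0$.

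Next I would promote this to $s(C')\ge 5$. The general member $C$ of $W$ is linearly normal, so $H^1(\sI_C(1))=0$, and the hypotheses $d\ge 31$, $g>\max\{d^2/10-d/2+18,\,G(d,6)\}$ easily imply $g\ge 3d-18$; hence Ellia's Proposition~\ref{Ellia} applies to $C'$ and gives $h^0(\sI_{C'}(4))=0$. The case $s(C')\ge 6$ is excluded exactly as in Theorem~\ref{main4}, since it would force $g\le G(d,6)$ via \eqref{maxgen}--\eqref{maxgen2}; so $s(C')=5$, and a surface $F'$ of least degree containing $C'$ is integral. Then Proposition~\ref{20} with $s=5$ (legitimate as $d\ge 31>25$) bounds
\[
\dim V\ \le\ 55+\max\Big\{\big\lfloor\tfrac{d^2}{5}\big\rfloor-g,\ \big\lfloor\tfrac{d^2}{10}\big\rfloor,\ -d+g-1+h^0(\sO_{C'}(1))\Big\},
\]
and $h^0(\sO_{C'}(1))=4+h^1(\sI_{C'}(1))\le 4+h^1(\sI_C(1))=4$ by semicontinuity. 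Comparing with $\dim W=d+g+18<\dim V$: the third alternative yields $d+g+18<58-d+g$, i.e. $d<20$, absurd; the first yields $2g<37-d+\lfloor d^2/5\rfloor$, contradicting $g>d^2/10-d/2+18$; and the middle yields $g<37-d+\lfloor d^2/10\rfloor$, which I would rule out using the hypothesis $g>\max\{d^2/10-d/2+18,\,G(d,6)\}$. So no such $V$ exists and $W$ is an irreducible component.

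I expect the real work to be in that last (middle) case. Neither of the two lower bounds on $g$ in \eqref{range4.7} dominates $37-d+\lfloor d^2/10\rfloor$ over the whole range $d\ge 31$ by itself — that is precisely why the hypothesis is phrased as a maximum — so one has to split $d\ge 31$ into the sub-range where the parabola $d^2/10-d/2+18$ is the larger bound and the sub-range where $G(d,6)=\tfrac{d^2}{12}+d+O(1)$ is, and then verify $g>37-d+\lfloor d^2/10\rfloor$ in each, keeping careful track of the floors and of the exact value of $G(d,6)$ including its correction term. This is the delicate piece; everything else is a line-for-line copy of Theorem~\ref{main4}'s proof, and the analogous junctures in the first and third cases above carry enough numerical slack to be routine.

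Finally, for non-reducedness I would use \eqref{new4.6} with $s=3$ again: $\dim W+h^1(\sI_C(3))=h^0(\sN_C)=\dim T_{(C)}\HH(d,g)_{sc}$. If $h^1(\sI_C(3))\ne 0$ then $\dim W<h^0(\sN_C)$, so the component $W$ is non-reduced; if $h^1(\sI_C(3))=0$ then, since for $s=3$ the obstruction map $\alpha_C$ is automatically surjective so that ${\rm D}(d,g;3)$ is smooth at $(C,S)$, and $pr_1$ is also smooth there by $H^1(\sI_C(3))=0$, the family $W$ is generically smooth. This gives the stated equivalence. For the ``in particular'': in the range \eqref{range4.7} one has $d\ge 31\ge 14$, the hypothesis forces $g\ge 3d-18$, and fact (A) turns $H^1(\sI_C(3))\ne 0$ into $g\le(d^2-4)/8$, so ``$W$ non-reduced'' is equivalent to the full numerical criterion of Conjecture~\ref{conj}; hence the conjecture holds throughout \eqref{range4.7}.
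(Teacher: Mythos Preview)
Your proposal is correct and follows exactly the approach the paper indicates: run the proof of Theorem~\ref{main4} with $\dim W=d+g+18$ in place of $g+33$, and invoke Ellia's Proposition~\ref{Ellia} (using linear normality and the easy consequence $g\ge 3d-18$) to push $s(C')$ past $4$. The paper's own proof is a one-line sketch saying precisely this; you have supplied the details it omits. Your identification of the middle alternative as the only non-routine case is accurate --- for $d\ge 35$ the bound $g>d^2/10-d/2+18$ suffices after an integrality argument, while for $31\le d\le 34$ one needs $g>G(d,6)$, and this is just a finite check (consistent with the paper's remark after the proposition that $G(d,6)\ge d^2/10-d/2+18$ exactly for $d\le 74$). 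The first and third alternatives and the non-reducedness equivalence via \eqref{new4.6} are handled correctly.
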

Note that we in \eqref{range4.7} have $G(d,6) \ge \frac{d ^{2} }{10} -
{\frac{d}{2}} + 18 $ if and only if $ d \le 74$. We can weaken the assumption
$g > G(d,6)$ by using Proposition~\ref{20} also for $s =6$ and 7. Indeed for
any $t$ such that $6 \le t \le 8$ we can conclude as in Proposition~\ref{4.7}
provided
$ g > \max \,\{ \,{\frac{d ^{2} }{10}} - {\frac{d}{2}} + 18 ,\, G(d,t)\,\}
\, , ~ d > t(t-1) \, .$
Since $G(d,8) \le \frac{d ^{2} }{10} - {\frac{d}{2}} + 18 $ for $d \ge 58$, we
obtain all conclusions of Proposition~\ref{4.7} in the range
\begin{equation}  \label{range4.8}
g   >  {\frac{d ^{2} }{10}} - {\frac{d}{2}} +  18  ,  ~ d  \ge   58 \, .
\end{equation}

\bigskip \bigskip

\end{document}